\def\sqr#1#2{$\vcenter{\hrule height.#2pt
\hbox{\vrule width.#2pt height#1pt \kern#1pt \vrule width.#2pt} \hrule
height.#2pt}$}
\def\qed{\sqr53}
\def\O{\Omega}
\def\M{\Lambda}
\def\U{\mathcal{U}}
\def\pp{{\prime\prime}}
\def\p{\prime}
\newtheorem{theorem}{Theorem}
\newtheorem{proposition}{Proposition}
\newtheorem{corollary}{Corollary}
\newtheorem{remark}{Remark}
\def\b{\textbf{b}}
\begin{document}
\title[The number of  rational links with a given deficiency]{The number of oriented rational links with a given deficiency number}
\author{Yuanan Diao$^{\ast}$$^{\ddag}$, Michael Lee Finney$^{\ddag}$, Dawn Ray$^{\ddag}$}
\address{
$^{\ddag}$ Department of Mathematics and Statistics\\
University of North Carolina at Charlotte\\
 Charlotte, NC 28223, USA\\
$^{\ast}$ Correspondance E-mail: ydiao@uncc.edu}

\begin{abstract}
Let $\U_n$ be the set of  un-oriented and rational links with crossing number $n$, a precise formula for $|\U_n|$ was obtained by Ernst and  Sumners in 1987. In this paper, we study the enumeration problem of oriented rational links. Let $\M_n$ be the set of oriented rational links with crossing number $n$ and let $\M_n(d)$ be the set of oriented rational links with crossing number $n$ ($n\ge 2$) and deficiency $d$. In this paper,  we derive precise formulas for $|\M_n|$ and $|\M_n(d)|$ for any given $n$ and $d$ and show that 
$$
\M_n(d)=F_{n-d-1}^{(d)}+\frac{1+(-1)^{nd}}{2}F^{(\lfloor \frac{d}{2}\rfloor)}_{\lfloor \frac{n}{2}\rfloor -\lfloor \frac{d+1}{2}\rfloor},
$$
where $F_n^{(d)}$ is the convolved Fibonacci sequence. 
\end{abstract}

\section{Introduction}

The enumeration of knots and links is a classical problem in knot theory. Examples include knot enumeration tables in any typical knot theory textbook such as \cite{A,BZ,Cromwell}, and the exact counting of the number of knots/links with a given crossing number such as \cite{Co,Doll,Hoste1998}. Of particular interest to us in this paper is the enumeration of rational links. Let $\U_n$ and $\M_n$ be the sets of un-oriented and oriented rational links with crossing number $n$ respectively.
From the  work of  Ernst and Sumners in \cite{Ernst1987},  one can obtain $|\U_n|=2^{n-3}+2^{\lfloor\frac{n-3}{2}\rfloor}$ for $n\ge 2$, where $\lfloor x\rfloor$ denotes the greatest integer that is less than or equal to $x$. It is known that rational links are invertible, that is, changing the orientations of all components in a rational link will not change the link type. However, in the case that an oriented rational link $\mathcal{L}$ has two components, changing the orientation of only one component of $\mathcal{L}$ may result in a rational link that is topologically different from $\mathcal{L}$ (as oriented links). $\mathcal{L}$ is said to be {\em strongly invertible} if changing the orientation of one of its component does not change its link type. This difference is not observed when $\mathcal{L}$ is un-oriented. Thus in the counting of un-oriented rational links it is as if we are treating all oriented rational links with two components as strongly invertible ones. With this observation, one would be able to find a precise formula for $|\M_n|$, provided that one can determine the precise number of strongly invertible rational links with crossing number $n$. Indeed this is one of the results obtained in this paper. However this is a relatively easy result. 

\medskip
The main objective of this paper concerns the enumeration of  oriented rational links with a given crossing number as well as a given {\em deficiency}  number. For an oriented link $\mathcal{L}$, its deficiency number (or just deficiency for short) $d(\mathcal{L})$ is defined as 
\begin{equation}\label{deficiency_def}
d(\mathcal{L})=Cr(\mathcal{L})-2g(\mathcal{L})-\b(\mathcal{L})-\mu(\mathcal{L})+2,
\end{equation}
where $Cr(\mathcal{L})$ is the crossing number of $\mathcal{L}$, $g(\mathcal{L})$ is the genus of $\mathcal{L}$, $\b(\mathcal{L})$ is the braid index of $\mathcal{L}$ and $\mu(\mathcal{L})$ is the number of components of $\mathcal{L}$. The concept of the deficiency number is related to the additivity of crossing numbers under the connected sum operation. An old and open conjecture in knot theory states that the crossing number of the connected sum of two links is the sum of the crossing numbers of the two links. It has been shown in \cite{Diao2004} that 
$$
Cr(\mathcal{L}_1)+Cr(\mathcal{L}_2)-d(\mathcal{L}_1)-d(\mathcal{L}_2)\le Cr(\mathcal{L}_1\#\mathcal{L}_2)\le Cr(\mathcal{L}_1)+Cr(\mathcal{L}_2),
$$ 
hence this conjecture is true if both links are of deficiency zero.
Let $\M_n(d)$ be the set of oriented rational links with crossing number $n$ and deficiency $d$ ($n\ge 2$ and it is necessary that $d\le (n-2)/2$). It is apparent that
$\{\M_n(0), \M_n(1), ..., \M_n(\lfloor \frac{n-2}{2}\rfloor)\}$ is a partition of  $\M_n$ so we have
\begin{equation}\label{partition_sum}
\sum_{d=0}^{\lfloor \frac{n-2}{2}\rfloor}|\M_n(d)|=|\M_n|.
\end{equation}

\medskip
In \cite{DiaoErnst2005}, it was estimated that $|\M_n(0)|$ is bounded below by $e^{\frac{\ln 3}{7} n}$. In this paper, we will derive a precise formula of $|\M_n(d)|$ for any given $n$ and $d$. Furthermore, we show that 
\begin{equation}\label{main}
\M_n(d)=F_{n-d-1}^{(d)}+\frac{1+(-1)^{nd}}{2}F^{(\lfloor \frac{d}{2}\rfloor)}_{\lfloor \frac{n}{2}\rfloor-\lfloor \frac{d+1}{2}\rfloor},
\end{equation}
where $F_n^{(d)}$ is the convolved Fibonacci sequence defined by $F_{n}^{(0)}=F_n$ and 
$F_{n}^{(d+1)}=\sum_{i=0}^n F_{i}F_{n-i}^{(d)}$ with $\{F_0,F_1,F_2,F_3,F_4,F_5,...\}=\{0,1,1,2,3,5,...\}$ being the Fibonacci sequence. In the particular case of $d=0$, we have
$|\M_n(0)|=F_{n-1}+F_{\lfloor \frac{n}{2}\rfloor}$. Using this result, the lower bound on $|\M_n(0)|$ is improved from $e^{\frac{\ln 3}{7} n}=(\sqrt[7]{3})^n\approx 1.17^n$ to approximately $(1.618^{n-1}+1.272^n)/\sqrt{5}$.
 
\medskip
We organize the rest of the paper in the following way. In the next section, we introduce the rational links in a  special form of 4-plats (called {\em preferred standard form} or PS form for short) which we will use throughout this paper for our purpose of enumeration. In Section  \ref{Seifert_Decomposition}, we discuss the Seifert circle decompositions of 4-plats in the PS form. In  Sections \ref{enu_ori} and \ref{enu_def} we derive the combinatorial formulas for $|\M_n|$ and $|\M_n(d)|$ respectively in that order. In the last section, we give the proof for (\ref{main}).
 
\medskip
\section{Rational Links as 4-plats in the Preferred Standard Form}

We shall assume that our readers have some basic knowledge of knot theory and its usual terminology, as well as some basic knowledge of the rational links. \cite{BZ} is a good reference for readers who are not familiar with these subjects. Throughout the rest of this paper, all rational links will be oriented unless otherwise stated. For the purpose of this paper, we would like to adhere to a particular presentation of the rational links. Let $p$ and $q$ be two positive integers with $\rm{gcd}(p,q)=1$ and $0<p<q$. Let $(a_1,a_2,...,a_{2k+1})$ be the (unique) vector of odd length with positive integer entries such that 
$$
\displaystyle\frac{p}{q}=\displaystyle\frac{1}{a_1+\frac{1}{a_2+\frac{1}{....\frac{1}{a_{2k}+\frac{1}{a_{2k+1}}}}}}.
$$
For the sake of convenience we will write the above as $p/q=[a_1,a_2,...,a_{2k+1}]$. In this paper we shall represent a rational link corresponding to $p/q$ as a 4-plat as shown in Figure \ref{Figure1} for the case of $56/191=[3,2,2,3,3]$, with the long arc in the 4-plat at the bottom and oriented from right to left (we can do so because oriented rational links are known to be invertible). Furthermore, we want the first string of horizontal crossings from the left of the 4-plat to be corresponding to $a_1$ and the strand connected to the long arc at the bottom being the under strand. For the sake of convenience we shall call a 4-plat satisfying these conditions a 4-plat in the {\em preferred standard form} (PS for short) in this paper. 

\medskip
A 4-plat $\mathcal{L}(p/q)$ has two components if and only if $q$ is even. When $\mathcal{L}(p/q)$ has two components, the orientation of the component containing the long arc has already been determined by our choice of the orientation of the long arc (from right to left), but the orientation of the other component has two choices, one of which will make the left most crossing in the 4-plat positive and the other one will make it negative. We shall denote these two 4-plats by $\mathcal{L}_+(p/q)$ and $\mathcal{L}_-(p/q)$ respectively. For example, the 4-plat shown in Figure \ref{Figure1} is $\mathcal{L}_-(56/191)$. Thus for a 4-plat in the PS form corresponding to $p/q=[a_1,a_2,...,a_{2k+1}]$, the orientation of the over strand at the left most crossing (hence the 4-plat as an oriented link) is either already determined (in the case that $q$ is odd), or it can be either $\mathcal{L}_+(p/q)$ or $\mathcal{L}_-(p/q)$ (but nothing else, since the orientation of the components completely determine the 4-plat).

\medskip
\begin{figure}[!hbt]
\begin{center}
\includegraphics[scale=.35]{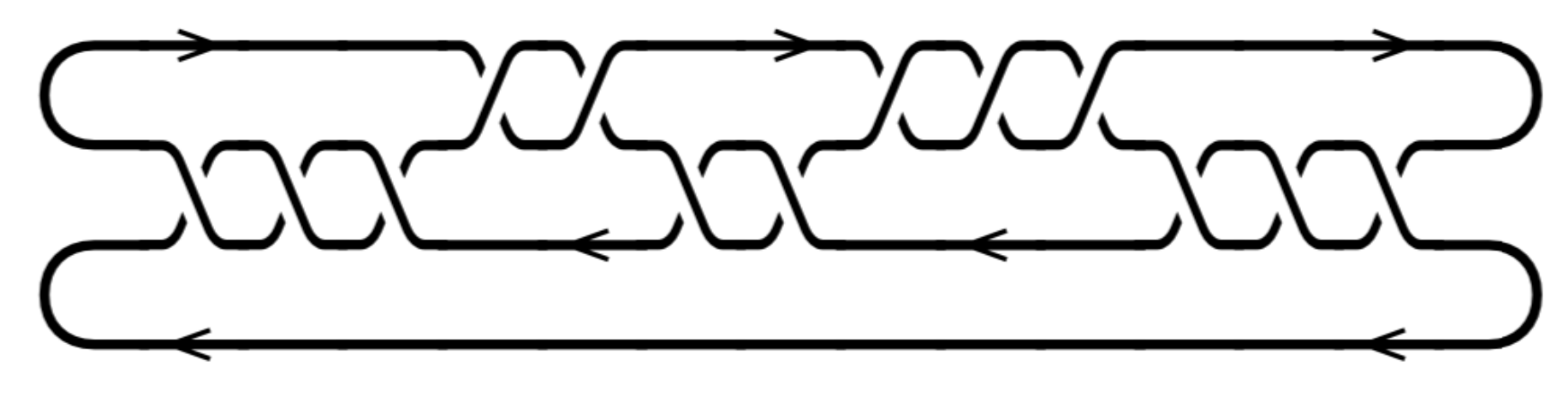}
\end{center}
\caption{The rational link $\mathcal{L}_-(56/191)$ as a 4-plat in the preferred standard form.
\label{Figure1}}
\end{figure}

Given a 4-plat $\mathcal{L}$ in the PS form, if we rotate it by 180 degrees around the $y$ axis and changing orientations of the components, then the resulting 4-plat $\mathcal{L}^\p$ (which we will call the {\em reversal} of $\mathcal{L}$) is equivalent to the inverse of $\mathcal{L}$, which in turn is equivalent to $\mathcal{L}$ since rational links are known to be invertible. Thus $\mathcal{L}^\p\sim \mathcal{L}$.

\medskip
For the sake of convenience, we state the following two classical results concerning rational links due to Schubert \cite{Schubert}. 

\medskip
\begin{proposition}{\em \cite{Schubert}}\label{Proposition1} Suppose that rational tangles with fractions $p/q$ and $p^\p/q^\p$ are given with $\rm{gcd}(p,q)=\rm{gcd}(p^\p,q^\p)=1$. If $\mathcal{L}(p/q)$ and 
$\mathcal{L}(p^\p/q^\p)$ are the corresponding rational links obtained by taking denominator closures of these tangles, then $\mathcal{L}(p/q)$ and 
$\mathcal{L}(p^\p/q^\p)$ are topologically equivalent (as un-oriented links) if and
only if (i) $q=q^\p$ and (ii) either $p\equiv p^\p$ (mod $q$) or $p\cdot p^\p\equiv  1$ (mod $q$).
\end{proposition}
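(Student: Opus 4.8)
\medskip
\noindent\textbf{Plan of proof.}
Since this is a classical theorem I only outline the argument. The plan is to prove the two implications separately: for the ``only if'' direction I would pass to the double branched cover and invoke the classification of lens spaces, and for the ``if'' direction I would exhibit explicit isotopies of the $4$-plats using one elementary identity for continued fractions.

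For the ``only if'' direction, recall that the double branched cover of $S^3$ along $\mathcal{L}(p/q)$ is the lens space $L(q,p)$: cutting $\mathcal{L}(p/q)$ along the Conway spheres of its $4$-plat decomposition writes it as a union of rational tangles, the double cover of a ball branched over a trivial arc pair is a solid torus, and gluing these solid tori according to the twist vector $[a_1,\dots,a_{2k+1}]$ produces a genus-one Heegaard splitting whose gluing slope is the continued fraction $p/q$ (up to the harmless ambiguity $p\mapsto p^{\pm1}$). Hence if $\mathcal{L}(p/q)\sim\mathcal{L}(p^\p/q^\p)$, an ambient isotopy of $S^3$ carrying one link to the other lifts to an orientation-preserving homeomorphism $L(q,p)\to L(q^\p,p^\p)$. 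By the Reidemeister--Franz classification of lens spaces (proved via Reidemeister torsion), this forces $q=q^\p$ --- which is (i) --- and $p^\p\equiv p^{\pm1}$ (mod $q$); since $0<p,p^\p<q$, the case $p^\p\equiv p$ reads $p=p^\p$ while the case $p^\p\equiv p^{-1}$ reads $p\cdot p^\p\equiv 1$ (mod $q$), which is (ii).

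For the ``if'' direction, recall Conway's theorem that a rational tangle is determined up to isotopy by its fraction, so that continued fractions of equal value have isotopic $4$-plat closures. Assume $q=q^\p$. If $p\equiv p^\p$ (mod $q$) then $p=p^\p$ and there is nothing to do, so assume $p\cdot p^\p\equiv 1$ (mod $q$). Now use the elementary fact that reversing a word of positive integers, $[a_1,\dots,a_m]\mapsto[a_m,\dots,a_1]$, preserves the denominator $q$ and sends the numerator $p$ to $\bar p$ with $p\bar p\equiv(-1)^{m+1}$ (mod $q$); applied to the odd-length vector $[a_1,\dots,a_{2k+1}]$ of $p/q$ this yields $p\bar p\equiv 1$ (mod $q$), i.e.\ $\bar p/q=p^\p/q^\p$. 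On the level of diagrams, reading the $4$-plat from the opposite end realizes this reversal and is nothing but a $180^\circ$ rotation of $S^3$, so it does not change the link type of the closure; therefore $\mathcal{L}(p/q)\sim\mathcal{L}(p^\p/q^\p)$. (One could instead deduce this direction from the fact that a rational link is recovered from its double branched cover together with the covering involution, which is unique up to conjugacy on a lens space.)

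The real difficulty is concentrated in the $3$-manifold input to the ``only if'' direction: the identification of the branched cover with $L(q,p)$, and, above all, the Reidemeister--Franz theorem itself, whose proof rests on Reidemeister torsion as a homeomorphism invariant of lens spaces. A self-contained, purely $3$-dimensional proof would instead follow Schubert's original route --- put both links in bridge position and show that a homeomorphism must take one complete system of Conway spheres to another up to isotopy --- but the rigidity of the $4$-plat decomposition required there is precisely the hard point in any approach; the continued-fraction bookkeeping underlying the ``if'' direction is, by comparison, routine.
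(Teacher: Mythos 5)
The first thing to say is that the paper contains no proof of Proposition~\ref{Proposition1}: it is quoted as a classical theorem of Schubert and used as a black box, the only argument supplied in the text being the deduction of Theorem~\ref{Theorem1} \emph{from} Propositions~\ref{Proposition1} and~\ref{Proposition2}. So there is no in-paper proof to compare against, and your outline must be judged on its own terms. As a plan it is correct and is the standard modern route. For necessity, passing to the double branched cover $L(q,p)$ and invoking the Reidemeister--Franz classification is the usual argument; you are right that essentially all of the difficulty sits in the two imported facts (the identification of the branched cover with $L(q,p)$, and the lens-space classification itself), so what you have is an honest outline rather than a proof, which you acknowledge. For sufficiency, with the normalization $0<p,p'<q$ that is in force throughout the paper, $p\equiv p'\pmod q$ does force $p=p'$, and your reversal identity (reversing $[a_1,\dots,a_m]$ sends $p$ to $\bar p$ with $p\bar p\equiv(-1)^{m+1}\pmod q$, hence $\equiv 1$ for the odd-length vector) combined with the $180^\circ$ rotation is exactly the reversal move the paper itself exploits later in the proof of Theorem~\ref{Theorem1} and in Section~\ref{Seifert_Decomposition}; the two texts are consistent there.

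Two points to keep straight if you were to write this out. First, ``topologically equivalent as un-oriented links'' must be read as ambient isotopy of $S^3$, i.e.\ orientation-preserving on the ambient sphere; otherwise the lifted homeomorphism of lens spaces could reverse orientation and the classification would only yield $p'\equiv\pm p^{\pm1}\pmod q$, which is the (different) statement that also identifies mirror images. Second, the ``if'' direction quietly uses Conway's theorem that a rational tangle is determined up to isotopy (rel boundary) by its fraction; that is a second nontrivial input beyond the continued-fraction bookkeeping, and it deserves to be flagged alongside the branched-cover facts rather than treated as routine.
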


\begin{proposition}{\em \cite{Schubert}}\label{Proposition2} Suppose that two oriented rational links $\tilde{\mathcal{L}}(p/q)$ and 
$\tilde{\mathcal{L}}(p^\p/q^\p)$ are obtained by taking denominator closures of two orientation compatible tangles corresponding to rational numbers $p/q$ and $p^\p/q^\p$ with $p$, $p^\p$ odd, $\rm{gcd}(p,q)=\rm{gcd}(p^\p,q^\p)=1$, then $\tilde{\mathcal{L}}(p/q)$ and 
$\tilde{\mathcal{L}}(p^\p/q^\p)$ are topologically equivalent if and only if (i) $q=q^\p$ and (ii) either $p\equiv p^\p$ (mod $2q$) or $p\cdot p^\p\equiv  1$ (mod $2q$).
\end{proposition}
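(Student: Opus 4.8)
\medskip
\noindent\textbf{Proof proposal.} Proposition \ref{Proposition2} is the oriented refinement of Schubert's classification, and the plan is to deduce it from Proposition \ref{Proposition1} by tracking orientations carefully. The first step is to isolate the only case carrying new information. If $q$ (hence $q^\p$) is odd then $\tilde{\mathcal{L}}(p/q)$ and $\tilde{\mathcal{L}}(p^\p/q^\p)$ are knots, every rational knot is invertible, and since $p,p^\p$ are odd the integers $p$ and $p+q$ have opposite parities, so each of the congruences in (ii) holds mod $2q$ exactly when it holds mod $q$; thus for knots Proposition \ref{Proposition2} is merely Proposition \ref{Proposition1} restricted to odd numerators and there is nothing new. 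We may therefore assume that $q=q^\p$ is even, so that $\tilde{\mathcal{L}}(p/q)$ and $\tilde{\mathcal{L}}(p^\p/q)$ are two-component links $K_1\cup K_2$, with $K_1$ the component carrying the long arc of the PS 4-plat.

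\medskip
\noindent\emph{Necessity.} By Proposition \ref{Proposition1}, an equivalence of the underlying unoriented links forces $q=q^\p$ together with $p^\p\equiv p$ (mod $q$) or $p\cdot p^\p\equiv 1$ (mod $q$), and what remains is to strengthen each congruence from ``mod $q$'' to ``mod $2q$'', that is, to rule out replacing $p$ by $p+q$. The orientation-sensitive invariant that detects this in the visible cases is the linking number $\ell(p/q)=\mathrm{lk}(K_1,K_2)$, which can be read off the PS 4-plat directly from the continued fraction $[a_1,\dots,a_{2k+1}]$: reversing the orientation of $K_2$ corresponds to the substitution $p\mapsto p+q$ modulo $2q$ and changes $\ell$ to $-\ell$, whereas $p\mapsto p^{-1}$ modulo $2q$ leaves $\ell$ unchanged; in the residual case $\ell=0$ this must be supplemented by a finer oriented invariant (a signature, the Sato--Levine invariant, or the multivariable Alexander polynomial). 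The conceptually cleanest implementation goes through the double branched cover $\Sigma_2(\tilde{\mathcal{L}}(p/q))=L(q,p)$: the unoriented statement becomes the classical (Reidemeister) classification of lens spaces, while the orientation of the link lifts to an extra $\mathbb{Z}/2$ of data on $L(q,p)$ whose complete invariant is the residue of $p$ modulo $2q$. Verifying that this refinement of the lens-space invariant is exactly that residue is the technical heart of the necessity half.

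\medskip
\noindent\emph{Sufficiency.} Conversely one must exhibit an orientation-preserving equivalence $\tilde{\mathcal{L}}(p/q)\sim\tilde{\mathcal{L}}(p^\p/q)$ whenever $p^\p\equiv p$ (mod $2q$) or $p\cdot p^\p\equiv 1$ (mod $2q$). The force of the congruence being mod $2q$ rather than mod $q$ is that the continued-fraction (equivalently 4-plat) presentations of $p/q$ and $p^\p/q$ then differ only by moves that preserve the partition of the four tangle endpoints into $K_1$ and $K_2$, namely insertion or deletion of a full twist and flype-type rearrangements of the plat, and each such move can be realized by an ambient isotopy respecting both orientations; the case $p\cdot p^\p\equiv 1$ (mod $2q$) is covered by the reversal operation recorded earlier in the excerpt (the $180^\circ$ rotation about the $y$ axis together with reversal of both orientations), once one checks that on 4-plats with odd numerator this realizes $p\mapsto p^{-1}$ modulo $2q$ and not merely modulo $q$. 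The principal obstacle, present on both sides of the argument, is precisely this orientation bookkeeping: establishing that the residue of $p$ modulo $2q$ is the complete invariant of the oriented rational link, so that neither more identifications (sufficiency) nor fewer (necessity) occur than the stated congruences predict; with that in hand the remaining verifications are routine.
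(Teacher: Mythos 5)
The paper does not prove this statement at all: Proposition \ref{Proposition2} is quoted as a classical theorem of Schubert, cited to \cite{Schubert}, and the authors only use it (together with Proposition \ref{Proposition1}) as an input to their proof of Theorem \ref{Theorem1}. So there is no in-paper argument to compare against; what can be assessed is whether your proposal stands on its own as a proof. It does not. It is a roadmap, and a reasonable one --- the reduction of the odd-$q$ case to Proposition \ref{Proposition1} via the parity observation that for $q$ odd and $p,p^\p$ odd a congruence mod $q$ between odd residues automatically lifts to mod $2q$ is correct and genuinely disposes of the knot case --- but every step that carries the actual content of the two-component case is deferred rather than carried out. In the necessity direction you concede that the linking number only separates $p$ from $p+q$ when $\ell\neq 0$, and for $\ell=0$ you gesture at ``a signature, the Sato--Levine invariant, or the multivariable Alexander polynomial'' without choosing one, computing it for $\tilde{\mathcal{L}}(p/q)$, or showing it distinguishes the two orientations; likewise the double-branched-cover paragraph asserts that the oriented refinement of the lens-space classification is ``exactly the residue of $p$ modulo $2q$'' and then labels verifying this ``the technical heart,'' which is precisely the part a proof must contain.

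The sufficiency direction has the same character: the claim that when $p^\p\equiv p$ (mod $2q$) the two 4-plats differ only by moves realizable by orientation-preserving ambient isotopy, and that the reversal operation realizes $p\mapsto p^{-1}$ modulo $2q$ rather than merely modulo $q$, are exactly the assertions that need proof, and you state them without an argument. (The second is delicate: the reversal certainly realizes inversion mod $q$; pinning down which of the two odd lifts mod $2q$ it produces is the whole point of the oriented statement.) In short, nothing you wrote is wrong, and your outline is consistent with how the oriented Schubert classification is actually established in the literature, but as submitted it is a statement of intent with the two decisive verifications --- the complete oriented invariant in the necessity half and the explicit isotopies in the sufficiency half --- left as placeholders. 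If the goal is to supply a proof the paper omits, you would either need to execute the branched-cover computation in full or, more economically, cite the oriented classification directly as the paper does.
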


\medskip
For our purpose we need the result stated in the following theorem, which is a corollary of Propositions \ref{Proposition1} and \ref{Proposition2}. For the sake of completeness we include a proof.

\begin{theorem}{\em \cite{Schubert}}\label{Theorem1}
Every oriented rational link $\mathcal{L}$ can be represented by at most two 4-plats in the PS form. In the case that  $\mathcal{L}$ can be represented by two distinct 4-plats in the PS form, these two 4-plats are the reversal of each other.
\end{theorem}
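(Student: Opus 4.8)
The plan is to deduce Theorem~\ref{Theorem1} from the two classical results of Schubert quoted above, carefully separating the case in which $q$ is odd (the link has one component, so there is no orientation ambiguity) from the case in which $q$ is even (two components, giving the pair $\mathcal{L}_\pm(p/q)$). First I would fix an oriented rational link $\mathcal{L}$ and let $p/q=[a_1,\dots,a_{2k+1}]$ be a PS representative, so that $0<p<q$, $\gcd(p,q)=1$, and the continued fraction has odd length with positive entries; the key point is that this normalization pins down the fraction $p/q$ uniquely among all fractions with $0<p<q$ representing the same \emph{un-oriented} link after we account for Proposition~\ref{Proposition1}. So any second PS representative $p^\p/q^\p$ must satisfy $q^\p=q$ and either $p^\p\equiv p$ or $p p^\p\equiv 1 \pmod q$; since $0<p,p^\p<q$ the first alternative forces $p^\p=p$ (hence the same 4-plat), so the only way to get a genuinely different PS 4-plat is $p p^\p\equiv 1\pmod q$ with $p^\p\neq p$, i.e. $p$ is not its own inverse mod $q$.

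Next I would bring in orientations. When $q$ is odd the link is a knot; there is a unique orientation up to the global reversal under which rational links are invertible, so "equivalent as un-oriented links" and "equivalent as oriented links" coincide, and the above dichotomy already gives at most two PS 4-plats. The substantive case is $q$ even: here I would make the entries $p,p^\p$ odd (legitimate since $q$ is even and $\gcd(p,q)=1$ forces $p$ odd already, similarly $p^\p$), invoke Proposition~\ref{Proposition2} with modulus $2q$, and compare with Proposition~\ref{Proposition1}'s modulus $q$. The point is that $\mathcal{L}_+(p/q)$ and $\mathcal{L}_-(p/q)$ are the two oriented links sharing the same un-oriented type; they correspond to the two residues mod $2q$ that reduce to $p$ mod $q$, namely $p$ and $p+q$. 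So an oriented equivalence $\mathcal{L}(p^\p/q)\sim \mathcal{L}(p/q)$ requires, mod $2q$, either $p^\p\equiv p$ (same 4-plat) or $p^\p\equiv p$-inverse mod $2q$; one then checks this still produces at most one new fraction $p^\p$ with $0<p^\p<q$, and that the associated 4-plat is exactly the reversal $\mathcal{L}^\p$ of $\mathcal{L}$ described in the paragraph preceding the theorem (rotation by $180^\circ$ about the $y$-axis together with reversal of all orientations), which was already observed to satisfy $\mathcal{L}^\p\sim\mathcal{L}$.

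The two remaining things to nail down are: (a) the reversal operation, at the level of fractions, sends $p/q=[a_1,\dots,a_{2k+1}]$ to $[a_{2k+1},a_{2k},\dots,a_1]=p^\pp/q$ where $p^\pp$ is characterized by $p\,p^\pp\equiv(-1)^{?}\pmod q$ (the standard continued-fraction palindrome/inverse identity), so that the "other" PS 4-plat predicted by Schubert is \emph{realized} by an actual PS diagram and is precisely the reversal; and (b) that when $p$ \emph{is} its own inverse in the relevant modulus, the reversal coincides with the original and $\mathcal{L}$ has a unique PS representative. I would assemble these into: every oriented rational $\mathcal{L}$ has $\geq 1$ PS representative by construction, $\leq 2$ by the Schubert dichotomy, and when there are exactly two they are reversals of each other by the continued-fraction computation in (a).

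The main obstacle I expect is bookkeeping the interplay between the two moduli $q$ and $2q$ together with the orientation labels $\pm$: one must check that the extra residue class available mod $2q$ does not secretly create a \emph{third} PS 4-plat, and that the inverse-mod-$2q$ solution, when reduced to the range $0<p^\p<q$, lands on the reversal rather than on $\mathcal{L}_{\mp}$ of some other fraction. This amounts to a careful case analysis on the parity of $k$ and on whether $p^2\equiv 1 \pmod q$ versus $\pmod{2q}$, and verifying compatibility with the requirement (from the PS form) that the strand meeting the long arc is the under-strand and that $a_1$ sits leftmost — i.e. that the palindromic continued fraction genuinely yields a PS diagram and not merely an abstract rational tangle. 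None of this is deep, but it is the part where a sign error or an off-by-$q$ slip would break the "at most two" conclusion, so it deserves the most care.
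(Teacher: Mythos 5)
Your outline is sound, and it takes a genuinely different route from the paper's. The paper never actually uses Proposition~\ref{Proposition2} in its proof: it pins down the fraction using only the un-oriented classification (Proposition~\ref{Proposition1}), reduces to the palindromic case $p=p^\p$ with two components, and then disposes of the one dangerous sub-case (where the reversal of $\mathcal{L}_+(p/q)$ is again $\mathcal{L}_+(p/q)$, so one must show $\mathcal{L}_+(p/q)\not\sim\mathcal{L}_-(p/q)$) by importing the block-count invariant of Remark~\ref{block_remark} from \cite{Diao_Pham2020}. You instead run the whole argument through Schubert's oriented classification mod $2q$, which is more classical and self-contained but shifts the entire burden onto your items (a) and (b): you must actually prove that the $\pm$ labels of the PS form correspond to the two odd residues $p$ and $p+q$ mod $2q$, and that the reversal operation realizes the inverse residue mod $2q$ (not merely mod $q$). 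Those are precisely the computations that decide the critical sub-case --- for instance, if the residue $P$ of $\mathcal{L}_+(p/q)$ satisfies $P^2\equiv 1\pmod{2q}$ (reversal fixes $\mathcal{L}_+$), then $P(P+q)\equiv 1+Pq\equiv 1+q\not\equiv 1\pmod{2q}$, so Proposition~\ref{Proposition2} gives $\mathcal{L}_-\not\sim\mathcal{L}_+$, which is exactly what the paper extracts from the block invariant --- so until they are written out your argument is a plan rather than a proof. Also, your opening claim that the PS normalization ``pins down the fraction uniquely'' is a conclusion, not an input: the elementary congruence argument of the paper's Cases 1 and 2 (showing any third numerator $p^\pp$ must coincide with $p$ or $p^\p$) is the clean way to establish it, and you should keep that part essentially as the paper does.
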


\begin{proof}
Assume that $\mathcal{L}$ is represented by a 4-plat in the PS form corresponding to some rational number $p/q$ with $0<p<q$, $\rm{gcd}(p,q)=1$. The reversal of this 4-plat is also a preferred standard 4-plat in the form of $\mathcal{L}(p^\p/q)$ for some rational number $p^\p$ with $0<p^\p<q$, $\rm{gcd}(p^\p,q)=1$. We need to show that there are no other 4-plats in the PS form that are equivalent to $\mathcal{L}$. Assume the contrary, then $\mathcal{L}$ can be represented by another PS form 4-plat corresponding to some rational number $p^\pp$ with $0<p^\pp<q$, $\rm{gcd}(p^\pp,q)=1$. There are only two cases for us to consider.

\smallskip
Case 1. $p\not= p^\p$. We have $pp^\p\equiv 1$ mod$(q)$ and $pp^\pp\equiv 1$ mod$(q)$ by Proposition \ref{Proposition1}. It follows that $pp^\p\equiv pp^\pp$ mod$(q)$. Since $\rm{gcd}(p,q)=1$ and $0<p^\p,\ p^\pp<q$, it follows that $p^\p=p^\pp$. Similarly, $p^\p p\equiv p^\p p^\pp$ mod$(q)$ and $p=p^\pp$. Thus we have $p=p^\p$, which is a contradiction.

\smallskip
Case 2. $p=p^\p$. We have $p^2\equiv 1$ mod$(q)$ and $pp^\pp\equiv 1$ mod$(q)$, hence $p=p^\pp$ as well. Notice that $p=p^\p$ happens only if $p/q=[a_1,a_2,...,a_{k},\alpha,a_{k},...,a_2,a_1]$ for some positive integers $a_1,a_2,...,a_{k}$ and $\alpha$, that is, the odd length continued fraction decomposition of $p/q$ is a palindrome.  If $\mathcal{L}$ has only one component, there is only one way to draw the 4-plat hence $\mathcal{L}(p^\pp/q)$ must be the same as $\mathcal{L}(p/q)$, which is a contradiction. Thus the only case left to prove is when $\mathcal{L}$ has two components. If $\mathcal{L}(p/q)$ is $\mathcal{L}_+(p/q)$ ($\mathcal{L}_-(p/q)$) while its reversal is $\mathcal{L}_-(p/q)$ ($\mathcal{L}_+(p/q)$), then $\mathcal{L}(p^\pp/q)$ must be either $\mathcal{L}_+(p/q)$ or $\mathcal{L}_-(p/q)$ since $p^\pp=p$, which is also a contradiction. The final case is when $\mathcal{L}(p/q)$ is $\mathcal{L}_+(p/q)$ ($\mathcal{L}_-(p/q)$) while its reversal is also $\mathcal{L}_+(p/q)$ ($\mathcal{L}_-(p/q)$). Say $\mathcal{L}(p/q)=\mathcal{L}_+(p/q)$ (the case that $\mathcal{L}(p/q)=\mathcal{L}_-(p/q)$ can be similarly discussed) and its reversal is also $\mathcal{L}_+(p/q)$. This means that the right most crossing in $\mathcal{L}_+(p/q)$ is also positive. $\mathcal{L}(p^\pp/q)$ must be $\mathcal{L}_-(p/q)$ in order to be different, which means that it is obtained from $\mathcal{L}_+(p/q)$ by changing the orientation of the component containing the over strand at the left most crossing (same can be said about the right most crossing). However, by Remark \ref{block_remark} on the {\em blocks} of Seifert circle decomposition of oriented link diagrams in Section \ref{Seifert_Decomposition}, $\mathcal{L}_+(p/q)$ and $\mathcal{L}_-(p/q)$ cannot be equivalent in this case, and we arrive at a contradiction as desired. \qed
\end{proof}
 
 \medskip
An implication of Theorem \ref{Theorem1} (and its proof) is that if a rational link $\mathcal{L}(p/q)$ is strongly invertible, then the odd length positive continued fraction decomposition of $p/q$ is palindromic, $\mathcal{L}_+(p/q)$ and $\mathcal{L}_-(p/q)$ must be the reversal of each other, and the right most crossing of $\mathcal{L}_+(p/q)$ ($\mathcal{L}_-(p/q)$) must be negative (positive). 

\medskip
\section{Seifert Circle Decompositions of 4-plats in the PS Form}\label{Seifert_Decomposition}

Consider a rational link $\mathcal{L}(p/q)$ with $p/q=[a_1,a_2,...,a_{2m+1}]$ ($a_i>0$ for each $i$) as a 4-plat in the PS form. Since all crossings corresponding to a given $a_i$ have the same crossing sign, we can define a signed vector $[b_1,b_2,...,b_{2m+1}]$ where $b_i =\pm a_i$ with its sign given by the crossing sign of the crossings corresponding to $a_i$. For example, for $\mathcal{L}(5075/17426)$ with the orientation shown in Figure \ref{2bridgeone} we obtain the signed vector $[3,2,3,3,-1,-2,-3,4,-4]$ (hence this link is $\mathcal{L}_+(5075/17426)$ by our definition). Let us group the consecutive $b_j$'s with the same signs together and call these groups {\em blocks}, which we will denote by $B_1$, $B_2$, ...,  and so on. For example, the link given in Figure \ref{2bridgeone} has four blocks $B_1=(3,2,3,3)$, $B_2=(-1,-2,-3)$,  $B_3=(4)$ and $B_4=(-4)$. 

\begin{figure}[htb!]
\begin{center}
\includegraphics[scale=0.3]{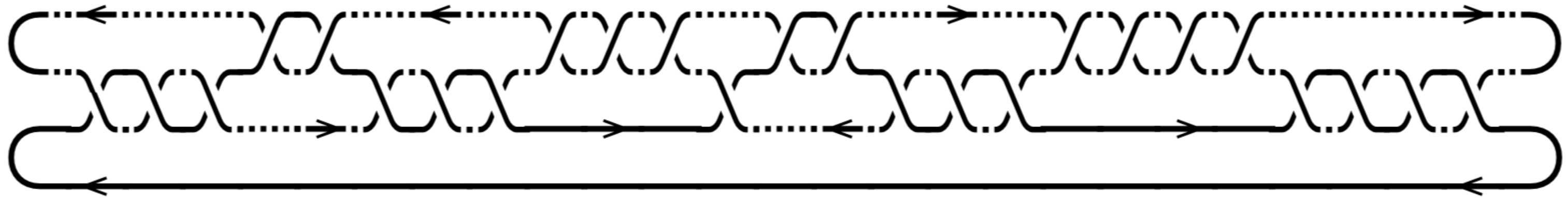}\\
\vspace{0.2in}
\includegraphics[scale=0.3]{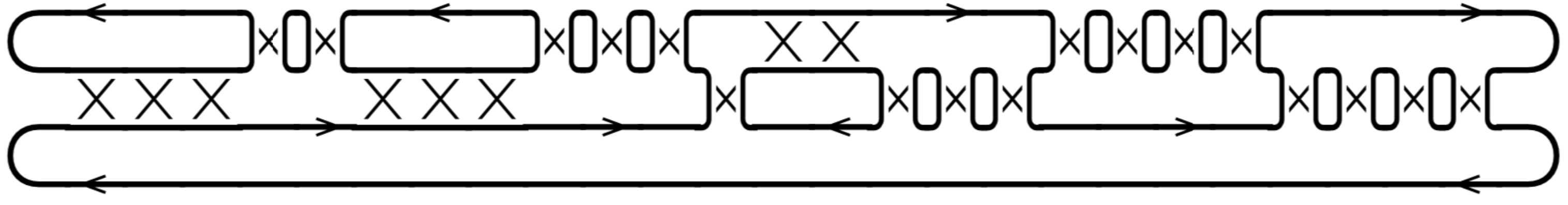}
\end{center}
\caption{Top: The 4-plat  given by $5075/17426=[3,2,3,3,1,2,3,4,4]$ with the shown orientation; Bottom: Its corresponding Seifert circle decomposition.}
\label{2bridgeone}
\end{figure}

\medskip
In the Seifert circle decomposition of $\mathcal{L}(p/q)$, we call the Seifert circle $C$ that contains the bottom strand of the 4-plat the {\em large} Seifert circle, a Seifert circle that shares at least a crossing with $C$ a {\em medium} Seifert circle and a Seifert circle that does does not share a crossing with $C$ a {\em small} Seifert circle. Each medium or small Seifert circle is either contained inside or outside of $C$. Using the well known facts that $\mathcal{L}(p/q)$ is alternating and alternating link diagrams are homogeneous, it is easy to see that the crossings inside $C$ all have negative signs and crossings outside of $C$ all have positive signs. Thus, a 4-plat $\mathcal{L}(p/q)$ in the PS form can be reconstructed from its Seifert circle decomposition, as long as the smoothed crossings are marked in the Seifert circle decomposition.

\medskip
In the following, we detail how a block in the signed vector of a 4-plat $\mathcal{L}(p/q)$ in the PS form is related to its Seifert circle decomposition.

\medskip
(i) There is only one block and it is positive; Here there are two possibilities, either the block is of length one or $m \geq 1$.  If the block is of length one, then $\mathcal{L}_+(1/q)=[b_1]$ is a knot if $b_1$ is odd and a link if $b_1$ is even and there is one medium Seifert circle. Otherwise the link is of the form $\mathcal{L}_+(p/q)=[b_1, b_2,..., b_{2m+1}]$ where each $b_{2m}=2\rho_m$, $\rho_m \in \mathbb{Z}$ represents $2\rho_m-1$ small Seifert circles and each $b_{2m+1}$ represents one medium Seifert circle. See Figure \ref{one_positive_block} for an example, where it shows the Seifert circle decomposition corresponding to the 4-plat with one positive block $[2,4,3,2,1,2,4]$. Notice that it has an odd length and the entries at even positions are even, which contribute a total of $(4-1)+(2-1)+(2-1)=5$ small Seifert circles. Since there are 4 entries at odd positions, these entries contribute a total of 4 medium Seifert circles. Therefore the total number of small/medium Seifert circles is $4+5=9$ and the link has a total of 10 Seifert circles including the large Seifert circle.

\begin{figure}[htb!]
\begin{center}
\includegraphics[scale=0.4]{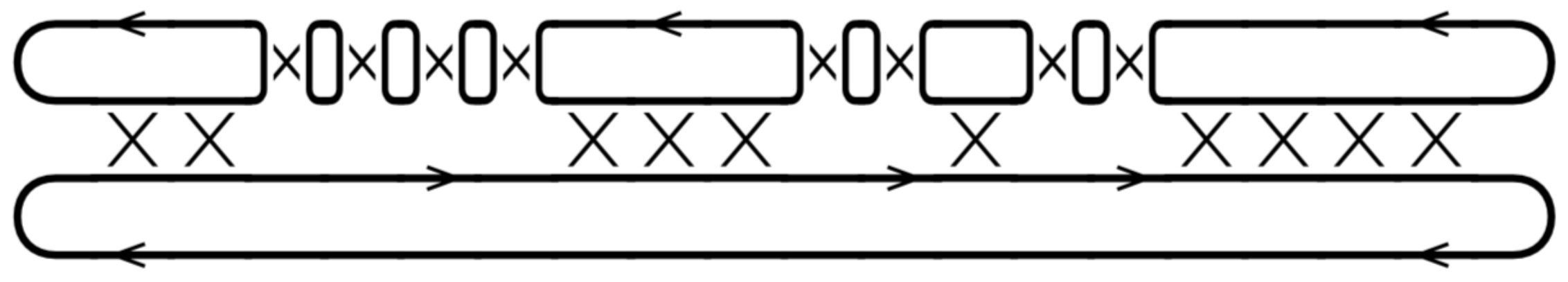}
\end{center}
\caption{The Seifert circle decomposition corresponding to a rational link with one positive block $[2,4,3,2,1,2,4]$.}
\label{one_positive_block}
\end{figure}

\medskip
(ii) There is only one block and it is negative; Again there are two possibilities.  If the block has one entry $\mathcal{L}_2(1/q)=[b_1]$ then $|b_1|=2\rho_1$ is even and it contributes $2\rho_1-1$ small Seifert circles.  Otherwise, the link is of the form $\mathcal{L}_2(p/q)=[b_1, b_2,..., b_{2m+1}]$ where $|b_{2j+1}|=2\rho_{2j+1}+1$ is odd and contributes $2\rho_{2j+1}$ small Seifert circles if $j=0$ or $j=m$, and $|b_{2j+1}|=2\rho_{2j+1}$ is even and contributes $2\rho_{2j+1}-1$ small Seifert circles if  $1\le j\le m-1$, while each $b_{2j}$, $1\le j\le m$, contributes one medium Seifert circle. See Figure \ref{one_negative_block} for an example, where it shows the Seifert circle decomposition corresponding to the 4-plat with one negative block $[-3,-2,-2,-3,-4,-1,-2,-3,-1]$. Notice that it also has an odd length and the entries at odd indexed positions are even except the first and the last, which contribute a total of $(3-1)+(2-1)+(4-1)+(2-1)+(1-1)=7$ small Seifert circles. Since there are 4 entries at even indexed positions, these entries contribute a total of 4 medium Seifert circles. Therefore the total number of small/medium Seifert circles is $4+7=11$ and the link has a total of 12 Seifert circles including the large Seifert circle.

\begin{figure}[htb!]
\begin{center}
\includegraphics[scale=0.4]{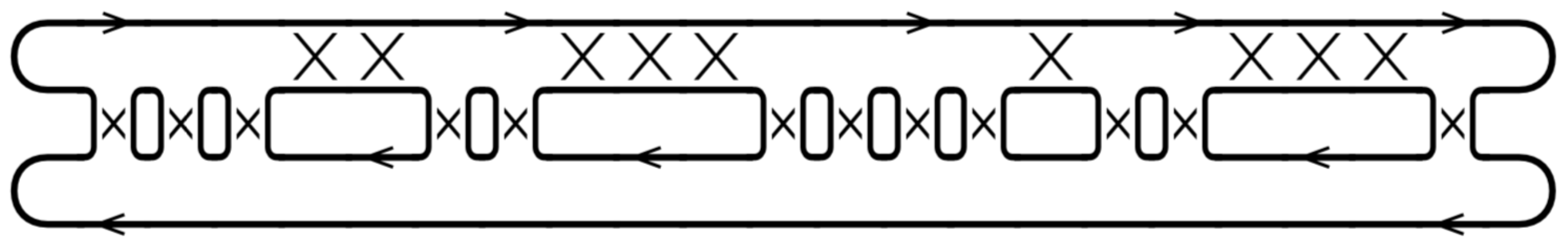}
\end{center}
\caption{The Seifert circle decomposition corresponding to a 4-plat with one negative block $[-3,-2,-2,-3,-4,-1,-2,-3,-1]$.}
\label{one_negative_block}
\end{figure}

\medskip
(iii) There are at least two blocks; In this case the positive and negative blocks will appear in an alternating manner. Any negative block regardless of its placement in the vector will follow the same rules as outlined in (ii).  For a positive block there are several cases depending on the location of the block. (a) If a positive block is the first one or the last one in the vector, then it must have an even length. A positive first block will be of the form $\langle b_1, b_2, ....b_{2j} \rangle$ for some $j\ge 1$, each $b_{2i}$ with $i<j$ (when $j>1$) is even and contributes $b_{2i}-1$ small Seifert circles, $b_{2j}$ is odd and contributes $b_{2j}-1$ small Seifert circles, while each $b_{2i+1}$ ($0\le i\le j-1$) contributes one medium Seifert circle. If the last block is positive, then it is a positive first block if we take the reversal so the discussion about the positive block can be applied. See Figure \ref{multiple_blocks} for an example. 
(b)  If a positive block  is in the middle of the vector, then the discussion is similar to the case of a single negative block. See Figure \ref{multiple_blocks} for an example.

\medskip
\begin{figure}[htb!]
\begin{center}
\includegraphics[scale=0.3]{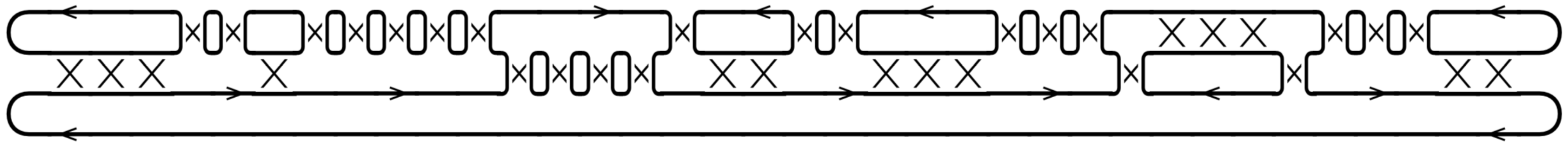}
\end{center}
\caption{The Seifert circle decomposition corresponding to the rational link with multiple blocks $[3,2,1,5,-4,1,2,2,3,3,-1,-3,-1,3,2]$.}
\label{multiple_blocks}
\end{figure}

\medskip
Let us call a Seifert circle decomposition obtained from a 4-plat in the PS form an {\em R-decomposition}. An R-decomposition can be constructed from a vector consisting of signed blocks satisfying conditions (i) to (iii) above. Based on the above discussions, we see that the   4-plats in the PS form,  as well as their corresponding R-decompositions of, can be divided into the following four types.

\medskip
\textit{Type I}: The left most crossing is positive and the right most crossing is negative. The corresponding Seifert circle decomposition has an even number of blocks, with the first block being positive and the last block being negative. 

\medskip
\textit{Type II}: The left most crossing is negative and the right most crossing is positive. The corresponding Seifert circle decomposition has an even number of blocks, with the first block being negative and the last block being positive.

\medskip
\textit{Type III}: The left most crossing and the right most crossing are both positive. The corresponding Seifert circle decomposition has an odd number of blocks, with the first block and last block both being positive.

\medskip
\textit{Type IV}: The left most crossing and the right most crossing are both negative. The corresponding Seifert circle decomposition has an odd number of blocks, with the first block and last block both being negative.

\medskip
We shall use $R_n^{I}$, $R_n^{II}$, $R_n^{III}$ and $R_n^{IV}$ to denote the sets of Type I, II, III, IV R-decompositions respectively and use $RS_n^{I}$, $RS_n^{II}$, $RS_n^{III}$ and $RS_n^{IV}$ the corresponding subsets within each that are symmetric (with respect to the reversal operation).

\medskip
\begin{remark}{\em 
Notice that the reversal of a Type I (Type II) 4-plat is of Type II (Type I) hence $RS_n^{I}=RS_n^{II}=\emptyset$ (see Figure \ref{Figure9} for an example), while the reversal of a Type III (Type IV) 4-plat remains a Type III (Type IV) 4-plat. Also, the R-decomposition of a 4-plat $\mathcal{L}_+$ must be of Type I or III, while the R-decomposition of a 4-plat $\mathcal{L}_-$ must be of Type II or IV.}
\end{remark}

\begin{figure}[!ht]
\begin{center}
\includegraphics[scale=.3]{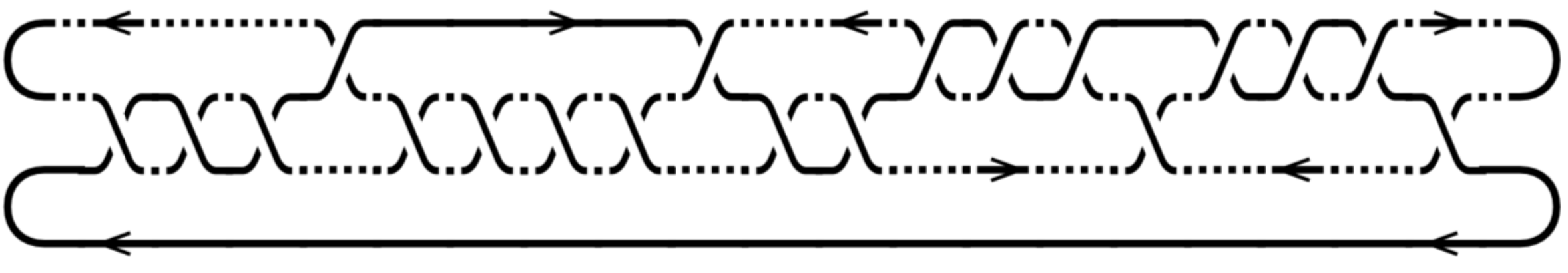}\\
\vspace{0.1in}
\includegraphics[scale=.3]{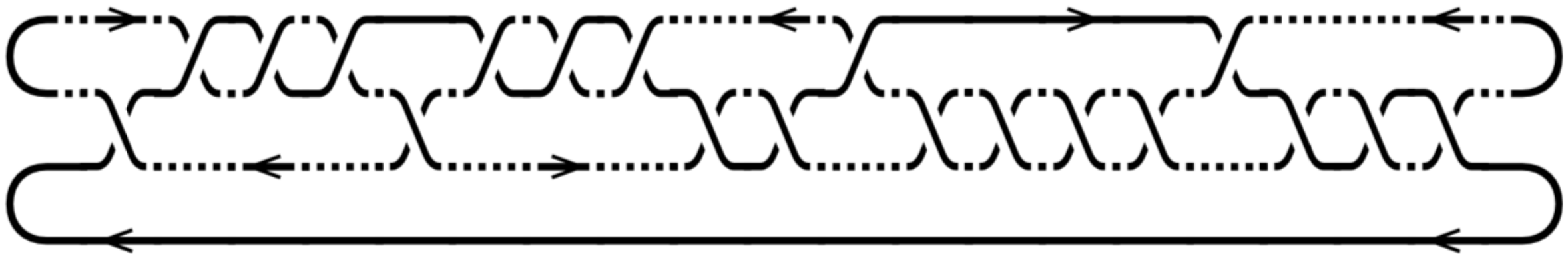}
\end{center}
\caption{The 4-plat with signed vector $[3,1,-4,1,2,3,-1,-3,-1]$ (Top) is of Type I while its reversal (Bottom) is of Type II.
\label{Figure9}}
\end{figure}

\medskip
\begin{remark}\label{R_counting}{\em 
Since there is a unique correspondence between a 4-plat in the PS form and its R-decomposition, an R-decomposition and its reversal correspond to one and only one (oriented) rational link by Theorem \ref{Theorem1}. Thus we can obtain the precise count of oriented rational links with a given crossing number from the number of R-decompositions with the same crossing number. More specifically, let $\mathcal{L}$ be a 4-plat in the PS form and $\mathcal{L}^\p$ be its reversal, and let $R$ and $R^\p$ be their R-decompositions respectively, then if $R$ and $R^\p$ are not identical ({\em i.e.}, they are not symmetric with respect to the $y$-axis), only one of them can be counted in  order to avoid over counting. This leads to 
\begin{equation}\label{counting1}
|\Lambda_n|=\frac{|\O_n|+|\O_n^\p|}{2},
\end{equation}
where $\O_n$ is the set of all R-decompositions with $n$ crossings and $\O_n^\p$ is the set of all symmetric R-decompositions with $n$ crossings.
}
\end{remark}

\medskip
\begin{remark}\label{R_counting_2}{\em 
Notice that the reversal operation is a one-to-one correspondence between Type I and Type II R-decompositions, so we have$|R^I_n|=|R^{II}_n|$. On the other hand, the mapping that takes a rational link to its  mirror image induces a one to one mapping between the signed vectors of (symmetric) Type III and (symmetric) Type IV rational links. For example, the mirror image of the Type III rational link with signed vector $[3,2,1,5,-4,1,2,2,3,3,-1,-3,-1,3,2]$ is $[-1,-2,-2,-1,-5,4,-1,-2,-2,-3,-3,1,3,1,-3,-1,-1]$. This then defines a one to one relation between $R^{III}$ ($RS^{III}$) and $R^{IV}$ ($RS^{IV}$). It follows that $|R^{III}_n|=|R^{IV}_n|$ and $|RS^{III}_n|=|RS^{IV}_n|$. Thus 
\begin{equation}\label{O_1}
|\O_n|= |R^I_n|+|R^{II}_n|+|R^{III}_n|+|R^{IV}_n|=2|R^I_n|+2|R^{III}_n|
\end{equation}
and
\begin{equation}\label{O_2}
|\O^\p_n|=|RS^{III}_n|+|RS^{IV}_n|=2|RS^{III}_n|.
\end{equation}
}
\end{remark}

\medskip
Substituting (\ref{O_1}) and (\ref{O_2}) into (\ref{counting1}), we obtain the following equation, which will serve as our main counting tool in Section 5.
\begin{equation} 
|\Lambda_n|=|R^I_n|+|R^{III}_n|+|RS^{III}_n|. \label{counting2}
\end{equation}

\section{The Enumeration of Oriented Rational Links}\label{enu_ori}

Let $\Lambda^1_n$ ($\Lambda^2_n$) be the set of oriented rational links with crossing number $n$ and one (two) component(s). We have $|\M_n|=|\Lambda^1_n|+|\Lambda^2_n|$.
Let $TK_n$ be the number of un-oriented rational knots (links with one component) with crossing number $n$, and $TL_n$ be the number of two component un-oriented rational links with crossing number $n$. Notice that these numbers are denoted by $TK^\ast_n$ and $TL^\ast_n$ respectively in \cite{Ernst1987}. Furthermore, let $TL^s_n$ be the number of two component un-oriented rational links with crossing number $n$ which can be represented by symmetric 4-plats in the PS form (but with the orientation information ignored). 
In the case of a rational knot, its orientation plays no important role since it is invertible. Thus $|\Lambda^1_n|=TK_n$. In the case of a two component rational link (represented by a  4-plat in the  PS form), the two 4-plats obtained by choosing different orientation of the component not containing the long arc are equivalent as oriented links if and only if they are strongly invertible. Thus $|\Lambda^2_n|=2TL_n-|\Lambda^\ast_n|$, and $|\M_n|=|\Lambda^1_n|+|\Lambda^2_n|=TK_n+2TL_n-|\M^\ast_n|$, where $\Lambda^\ast_n$ is the set of strongly invertible 4-plats with $n$ crossings. 

\medskip
\begin{remark}\label{block_remark}{\em 
By a recent result established in \cite{Diao_Pham2020}, the number of positive blocks and the number of negative blocks in the signed vector of an oriented 4-plat in the PS form are invariants among the minimum diagrams of the same link. Since the number of positive (negative) blocks in a Type III (Type IV) 4-plat is one more than the number of negative (positive) blocks in it, and $\mathcal{L}_+(p/q)$ and $\mathcal{L}_-(p/q)$ are of different types when $q$ is even and $\mathcal{L}(p/q)$ has two components, it follows that if $\mathcal{L}(p/q)$ is strongly invertible, then it must of Type I or Type II. By Theorem 8.1 in \cite{Ka-La}, if $\mathcal{L}(p/q)$ is strongly invertible, then $p/q=[a_1,a_2,...,a_k,\alpha,a_k,...,a_2,a_1]$ for some integers $a_1>0$, ..., $a_k>0$, $\alpha> 0$. This result has been strengthened in \cite{Diao_Pham2020} to the following theorem which completely characterizes strongly invertible rational links.
}
\end{remark}

\medskip
\begin{theorem}{\em \cite{Diao_Pham2020}}\label{strong_inv}
Let $\mathcal{L}$ be a rational link with two components and is represented by a 4-plat corresponding to $p/q$ (with $q$ even) in the PS form, then $\mathcal{L}$ is strongly invertible if and only if $p/q=[a_1,a_2,...,a_k,1+2\beta,a_k,...,a_2,a_1]$ for some integers $a_1>0$, ..., $a_k>0$, $\beta\ge 0$.
\end{theorem}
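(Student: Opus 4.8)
The plan is to prove both directions by combining the structural facts about blocks and reversals already assembled in the excerpt with Schubert's classification (Proposition \ref{Proposition2}). For the "only if" direction, suppose $\mathcal{L}(p/q)$ with $q$ even has two components and is strongly invertible. By Remark \ref{block_remark}, a strongly invertible rational link must be of Type I or Type II, and moreover its continued fraction is palindromic, $p/q=[a_1,\ldots,a_k,\alpha,a_k,\ldots,a_1]$ with $\alpha>0$ (this is the cited Theorem 8.1 of \cite{Ka-La}). It remains only to show $\alpha$ is odd, i.e. $\alpha=1+2\beta$. I would argue this by parity: since $q$ is even, exactly one of the continued-fraction entries must be even (this follows from the standard recursion computing $q$ from $[a_1,\ldots,a_{2m+1}]$ together with $\gcd(p,q)=1$), and the palindromic symmetry forces that unique even entry to be the central one $\alpha$ — if an off-center $a_i$ were even, then $a_{2k+2-i}=a_i$ would be a second even entry, contradiction. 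Hence $\alpha$ is... wait, that shows $\alpha$ is the even entry; so I instead need: the parity analysis together with the Type I/II requirement pins down which entry is even. Concretely, I would show that $\mathcal{L}(p/q)$ is of Type I or II (rather than III or IV) exactly when the central entry $\alpha$ is odd, using the block description in Section \ref{Seifert_Decomposition}: the sign pattern of the blocks of $[a_1,\ldots,a_k,\alpha,a_k,\ldots,a_1]$ depends on whether $\alpha$ contributes an even or odd number of crossings to the alternating-sign bookkeeping, and only the odd case yields first and last blocks of opposite sign.

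For the "if" direction, assume $p/q=[a_1,\ldots,a_k,1+2\beta,a_k,\ldots,a_1]$ with all $a_i>0$ and $\beta\ge 0$, and that $q$ is even (equivalently, $\mathcal{L}(p/q)$ has two components). Since the continued fraction is a palindrome, the reversal of the PS-form 4-plat $\mathcal{L}(p/q)$ is again a PS-form 4-plat corresponding to the same $p/q$; by the discussion following Theorem \ref{Theorem1}, $\mathcal{L}_+(p/q)$ and $\mathcal{L}_-(p/q)$ are reversals of one another and hence equivalent as oriented links, which is precisely strong invertibility — provided we are in the case where the reversal genuinely swaps $\mathcal{L}_+$ and $\mathcal{L}_-$ rather than fixing each. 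That is where the oddness of the central entry enters: with $\alpha=1+2\beta$ odd, a short count of crossing parities along the 4-plat shows the right-most crossing of $\mathcal{L}_+(p/q)$ has sign opposite to the left-most, so the $180^\circ$ rotation carries $\mathcal{L}_+$ to $\mathcal{L}_-$; combined with invertibility of rational links this gives $\mathcal{L}_+(p/q)\sim\mathcal{L}_-(p/q)$.

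I expect the main obstacle to be the parity bookkeeping that links the oddness of the central entry $\alpha$ to the Type (I/II versus III/IV) of the diagram and to the relative sign of the two extreme crossings. This is the crux on which both directions pivot, and it requires carefully tracking, through the block decomposition rules (i)--(iii) of Section \ref{Seifert_Decomposition}, how each $a_i$ and the center $\alpha$ affect whether a new block of opposite sign is opened. Everything else is assembling already-proved statements: Theorem \ref{Theorem1} and its corollary on palindromes, Remark \ref{block_remark}'s reduction to Type I/II via the block-number invariance of \cite{Diao_Pham2020}, and Proposition \ref{Proposition2} for the underlying equivalence criterion. I would also double-check the boundary case $k=0$ (so $p/q=[1+2\beta]$, giving $q=1+2\beta$ odd) to confirm it is correctly excluded by the hypothesis $q$ even, and the case $\beta=0$ to see that $\alpha=1$ still produces a genuinely two-component link whenever $q$ is even.
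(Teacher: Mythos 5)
First, a point of reference: the paper itself gives no proof of Theorem \ref{strong_inv} --- it is quoted from \cite{Diao_Pham2020}, with the weaker palindromicity statement attributed to Theorem 8.1 of \cite{Ka-La} --- so there is no in-paper argument to compare yours against. Your high-level skeleton is the natural one and matches how the paper frames the surrounding material: palindromicity of $[a_1,\dots,a_k,\alpha,a_k,\dots,a_1]$ from \cite{Ka-La}, reduction to Type I/II via the block-count invariance of Remark \ref{block_remark}, and for the converse the observation that for a palindrome the reversal is a PS-form 4-plat for the same $p/q$, so strong invertibility follows once one knows the reversal carries $\mathcal{L}_+(p/q)$ to $\mathcal{L}_-(p/q)$ rather than fixing it.

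There is, however, both a false step and a genuine gap. The false step is the claim that ``since $q$ is even, exactly one of the continued-fraction entries must be even.'' This is not true: $[2,1,2]=3/8$ has $q=8$ even and two even entries (and note this example \emph{does} satisfy the theorem with $\beta=0$, so it is strongly invertible even though its unique \emph{odd} entry is the center --- the opposite of what your parity claim would force); $[2,2,2]=5/12$ has $q=12$ even and three even entries. You notice mid-argument that something is off and pivot, but the pivot lands squarely on the unproved crux: the equivalence, for a palindromic two-component 4-plat in PS form, between ``the diagram is of Type I or II'' and ``the central entry $\alpha$ is odd.'' Both directions of your proof reduce to this equivalence (the ``only if'' needs Type I/II $\Rightarrow$ $\alpha$ odd; the ``if'' needs $\alpha$ odd $\Rightarrow$ the rightmost crossing of $\mathcal{L}_+(p/q)$ has sign opposite to the leftmost, i.e.\ Type I, so that the reversal swaps $\mathcal{L}_+$ and $\mathcal{L}_-$), and you explicitly defer it to ``careful parity bookkeeping'' through the block rules (i)--(iii) of Section \ref{Seifert_Decomposition}. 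That bookkeeping --- tracking how the parities of $a_1,\dots,a_k,\alpha$ determine the strand connectivity of the 4-plat and hence the sign pattern of the blocks --- is precisely the content of the theorem beyond what \cite{Ka-La} and Remark \ref{block_remark} already supply, and it is not carried out. As written, the proposal is a correct reduction of the theorem to an unproved combinatorial lemma, packaged with one incorrect auxiliary parity claim that should be deleted.
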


\medskip
An immediate consequence of Theorem \ref{strong_inv} is that a two component rational link with an even crossing number is never strongly invertible, and a two component rational link with a 4-plat in the PS form corresponding to a symmetric vector is always strongly invertible (of course it must have an odd crossing number in this case). That is, $|\Lambda^\ast_n|=0$ if $n$ is even and $|\Lambda^\ast_n|
=TL^s_n$ if  $n$  is odd. This leads to the enumeration formula for $|\Lambda_n|$ given in the following theorem .

\medskip
\begin{theorem}\label{general_4plat_count}
The total number of oriented rational links with a given crossing number $n\ge 2$ is given by the following formula:
\begin{eqnarray}\label{T2_eq}
|\Lambda_n|
&=&\left\{
\begin{array}{ll}
\frac{1}{3}(2^{n-1}+1)+2^{\frac{n}{2}-1} & {\rm if\ }n\ {\rm is\ even},\\
\frac{1}{3}(2^{n-1}+2^{\frac{n-1}{2}}-2) & {\rm if\ }n\ {\rm is\ odd\ and\ }n\equiv 1\ {\rm mod(4)},\\
\frac{1}{3}(2^{n-1}+2^{\frac{n-1}{2}}) & {\rm if\ }n\ {\rm is\ odd\ and\ }n\equiv 3\ {\rm mod(4)}.
\end{array}
\right.
\end{eqnarray}
\end{theorem}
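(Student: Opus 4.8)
The plan is to evaluate the three terms in the identity $|\Lambda_n|=TK_n+2TL_n-|\Lambda^\ast_n|$ obtained above. Since $TK_n+TL_n=|\U_n|=2^{n-3}+2^{\lfloor(n-3)/2\rfloor}$, this identity reads $|\Lambda_n|=|\U_n|+TL_n-|\Lambda^\ast_n|$, and we already know $|\Lambda^\ast_n|=0$ for even $n$ and $|\Lambda^\ast_n|=TL^s_n$ for odd $n$. So it suffices to produce closed forms for $TL_n$ and, for odd $n$, for $TL^s_n$; the three stated cases then follow by a routine computation once the residue of $n$ modulo $4$ (equivalently the parity of $(n-1)/2$) is separated.

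The value of $TL_n$ is part of the Ernst--Sumners enumeration \cite{Ernst1987}: a two-component rational link of crossing number $n$ is named by an odd-length positive composition of $n$ whose denominator $q$ is even, two such compositions name the same unoriented link exactly when one is the reversal of the other or their numerators are inverse to one another modulo $q$ (Proposition \ref{Proposition1}), and counting the even-$q$ compositions modulo this pair of involutions --- the palindromic ones being the fixed points that supply the $2^{\lfloor\cdot\rfloor}$-type correction --- yields the formula for $TL_n$.

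The substantive step is the evaluation of $TL^s_n$ for odd $n$. By Theorem \ref{strong_inv}, together with the observation following it that a two-component link whose PS-form vector is symmetric is automatically strongly invertible, the links counted by $TL^s_n$ are exactly those named by a palindromic vector $[a_1,\dots,a_k,1+2\beta,a_k,\dots,a_1]$. This correspondence is injective on palindromes: as noted in Case 2 of the proof of Theorem \ref{Theorem1}, a palindromic odd-length vector has numerator $p$ with $p^2\equiv 1$ modulo $q$, so Proposition \ref{Proposition1} forces two distinct palindromes to name two distinct links. Hence $TL^s_n$ is the number of palindromic odd-length positive compositions of $n$ with $q$ even. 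Now there are $2^{(n-1)/2}$ palindromic odd-length positive compositions of $n$ in all: such a composition is determined by its left half $(a_1,\dots,a_k)$ together with its centre, and summing $2^{s-1}$ over $s=a_1+\cdots+a_k=1,\dots,(n-1)/2$, plus the single $s=0$ case, gives $2^{(n-1)/2}$. To test the parity of $q$, reduce the usual denominator recursion $q_j=a_jq_{j+1}+q_{j+2}$, with $q_{m+2}=0$ and $q_{m+1}=1$, modulo $2$: an even entry swaps the two coordinates of the mod-$2$ state and an odd entry performs a Fibonacci step, so, since the centre $1+2\beta$ is odd, the net transformation has the form $wFw^{\mathrm{T}}$ in $GL_2(\mathbb{F}_2)\cong S_3$, where $F$ is the matrix of a Fibonacci step and $w$ is the product of swap/Fibonacci matrices determined by the parities of $a_1,\dots,a_k$. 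A short congruence computation in $GL_2(\mathbb{F}_2)$ shows that $q$ is even exactly when $w$ lies in one particular two-element subset; a transfer-matrix (equivalently, generating-function) count of the palindromic compositions of $n$ whose left-half parity pattern puts $w$ in that subset then gives $TL^s_n=\frac{1}{3}(2^{(n-1)/2}+2(-1)^{(n-1)/2})$.

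Substituting $|\U_n|$, the expression for $TL_n$, and (when $n$ is odd) the expression for $TL^s_n$ into $|\Lambda_n|=|\U_n|+TL_n-|\Lambda^\ast_n|$ and collecting terms gives the three displayed formulas; the split between $n\equiv 1$ and $n\equiv 3$ modulo $4$ is exactly the sign $(-1)^{(n-1)/2}$ in $TL^s_n$ combining with the $2^{\lfloor\cdot\rfloor}$-type correction carried by $TL_n$. I expect the main obstacle to be the $TL^s_n$ computation --- carrying out the congruence classification in $GL_2(\mathbb{F}_2)$ and then organizing the parity-weighted enumeration of palindromic compositions cleanly enough to land on the compact closed form above; everything else is bookkeeping once that count and the Ernst--Sumners value of $TL_n$ are in hand.
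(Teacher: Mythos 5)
Your proposal follows essentially the same route as the paper: both start from the identity $|\Lambda_n|=TK_n+2TL_n-|\Lambda^\ast_n|$, import $TK_n$ and $TL_n$ from \cite{Ernst1987}, use $|\Lambda^\ast_n|=0$ for even $n$ and $|\Lambda^\ast_n|=TL^s_n$ for odd $n$ (via Theorem \ref{strong_inv}), and finish by substitution. The only point of divergence is the evaluation of $TL^s_n$, which the paper obtains by citing the method of \cite[Lemma 2]{Ernst1987} while you derive it directly --- injectivity of the palindrome-to-link correspondence from $p^2\equiv 1 \pmod q$, the count $2^{(n-1)/2}$ of odd-length positive palindromic compositions of $n$, and a mod-$2$ continuant analysis in $GL_2(\mathbb{F}_2)$ to isolate those with $q$ even --- and your closed form $\frac{1}{3}\bigl(2^{(n-1)/2}+2(-1)^{(n-1)/2}\bigr)$ agrees exactly with the paper's two cases $\frac{1}{3}(2^{(n-1)/2}\pm 2)$, so this is a sound, self-contained substitute for the step the paper outsources.
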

Notice that the right side of (\ref{T2_eq}) can be combined into the following single expression
\begin{equation}\label{T2_eq_2}
\frac{1}{3}\left(2^{n-1}+\frac{5+(-1)^n}{2}2^{\lfloor \frac{n}{2}\rfloor-1}+\frac{-1+(-1)^n+
2(-1)^{\lfloor \frac{n+1}{2}\rfloor n}}{2}\right).
\end{equation}

\begin{proof} If $n$ is even, then there are no strongly invertible 4-plats (in  the PS form)  with $n$ crossings. For $n\ge  4$, $TK_n=\frac{1}{3}(2^{n-2}-1)$ and $TL_n=\frac{1}{3}(2^{n-3}+1)+2^{\frac{n-4}{2}}$ by \cite[Theorem 1]{Ernst1987}, thus 
\begin{eqnarray*}
|\Lambda_n|
&=&
TK_n+2TL_n\\
&=&
\frac{1}{3}(2^{n-2}-1)+\frac{1}{3}(2^{n-2}+2)+2^{\frac{n-2}{2}}\\
&=&
\frac{1}{3}(2^{n-1}+1)+2^{\frac{n}{2}-1}.
\end{eqnarray*}
Since there are two oriented rational links with crossing number 2, and the above formula also yields $|\Lambda_2|=2$, so the formula holds in general for any even  integer $n\ge 2$.
This proves the first part of (\ref{T2_eq}). 

\medskip
If $n=4k+1$ for some $k\ge 1$, then by \cite[Theorem 1]{Ernst1987}, we have 
$TK_n=\frac{1}{3}(2^{n-2}+2^{\frac{n-1}{2}})$ and $TL_n=\frac{1}{3}(2^{n-3}+2^{\frac{n-3}{2}})$. Furthermore, by an argument similar to the one used in the proof of \cite[Lemma 2]{Ernst1987}, we also obtain $TL^s_n=\frac{1}{3}(2^{\frac{n-1}{2}}+2)$. It follows that
\begin{eqnarray*}
|\Lambda_n|
&=&
TK_n+2TL_n-TL^s_n\\
&=&
\frac{1}{3}(2^{n-2}+2^{\frac{n-1}{2}})+\frac{1}{3}(2^{n-2}+2^{\frac{n-1}{2}})-\frac{1}{3}(2^{\frac{n-1}{2}}+2)\\
&=&
\frac{1}{3}(2^{n-1}+2^{\frac{n-1}{2}}-2).
\end{eqnarray*}

\medskip
Similarly, if $n=4k+3$ for some $k\ge 1$, we have 
$TK_n=\frac{1}{3}(2^{n-2}+2^{\frac{n-1}{2}}+2)$, and $TL_n=\frac{1}{3}(2^{n-3}+2^{\frac{n-3}{2}}-2)$  and
$TL^s_n=\frac{1}{3}(2^{\frac{n-1}{2}}-2)$. It follows that
\begin{eqnarray*}
|\Lambda_n|
&=&
TK_n+2TL_n-TL^s_n\\
&=&
\frac{1}{3}(2^{n-2}+2^{\frac{n-1}{2}}+2)+\frac{1}{3}(2^{n-2}+2^{\frac{n-1}{2}}-4)-\frac{1}{3}(2^{\frac{n-1}{2}}-2)\\
&=&
\frac{1}{3}(2^{n-1}+2^{\frac{n-1}{2}}).
\end{eqnarray*}
This proves Theorem \ref{general_4plat_count} for the case when $n$ is odd and $n\ge 5$. Since there are two oriented rational links with crossing number 3, the formula given in Theorem \ref{general_4plat_count} also works for this case.
This completes the proof of Theorem \ref{general_4plat_count}. \qed
\end{proof}

\section{The Enumeration of Rational Links with a Given Deficiency Number}\label{enu_def}

Let $D$ be a reduced alternating link diagram of an alternating link $\mathcal{L}$. Let $s(D)$ be the number of Seifert circles in $D$ and let $g(D)$ be the genus of the Seifert surface constructed from the Seifert circle decomposition of $D$.  It is known that $g(D)=g(\mathcal{L})$ \cite{Murasugi1958_1,Murasugi1958_2} and satisfies the equation 
$ 2g(D)=c(D)-s(D)-\mu(\mathcal{L})+2$ hence $0=c(D)-2g(\mathcal{L})-s(D)-\mu(\mathcal{L})+2$
 where $c(D)$ is the number of crossings in $D$, which also equals $Cr(\mathcal{L})$, the minimum crossing number of $\mathcal{L}$.  It follows that 
\begin{eqnarray*}
d(\mathcal{L})
&=&
(Cr(\mathcal{L})-2g(\mathcal{L})-\b(\mathcal{L})-\mu(\mathcal{L})+2)\\
&-&(c(D)-2g(\mathcal{L})-s(D)-\mu(\mathcal{L})+2)\\
&=&s(D)-\b(\mathcal{L}).
\end{eqnarray*}
Thus $\mathcal{L}$ is of deficiency zero if and only if $\b(\mathcal{L})=s(D)$ for any reduced alternating link diagram $D$ of $\mathcal{L}$. Consider a small Seifert circle in the R-decomposition of a 4-plat $\mathcal{L}$ in the PS form. It shares a single crossing with either another small Seifert  circle or a medium Seifert circle on either of its two sides, as shown in the right side of Figure \ref{Reduction}. We call the operation that removes this small Seifert circle and then combine its two neighboring 
Seifert circles into a single Seifert circle a {\em reduction operation}. By a recent result in \cite{DL}, we have $\b(\mathcal{L})=s(\mathcal{L})-r(\mathcal{L})$, where $s(\mathcal{L})$ is the number of Seifert circles in $\mathcal{L}$ and $r(\mathcal{L})$ is the number of reduction operations one can perform on $\mathcal{L}$. By our discussion in the introduction section, we see that $r(\mathcal{L})$ equals the deficiency number $d(\mathcal{L})$ of $\mathcal{L}$. Furthermore, $d(\mathcal{L})=d(\mathcal{L}^\ast)$ if $\mathcal{L}^\ast$ is the mirror image of $\mathcal{L}$. Thus by similar arguments used in Remark \ref{R_counting_2} to establish (\ref{counting2}), we have
\begin{equation}
|\Lambda_n(d)|=|R^I_n(d)|+|R^{III}_n(d)|+|RS^{III}_n(d)|, \label{counting_d}
\end{equation}
where $\Lambda_n(d)$ is the set of all rational links with crossing number $n$ and deficiency number $d$, $R^{I}_n(d)$, $R^{III}_n(d)$ and $RS^{III}_n(d)$ denote the sets of R-decompositions of Type I, Type III and symmetric Type III 4-plats with deficiency $d$ respectively.

\medskip
\begin{figure}[!ht]
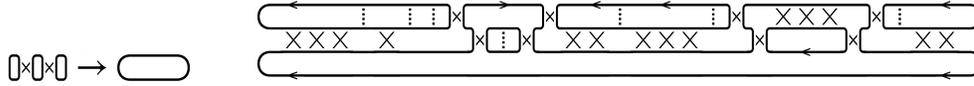

\begin{center}
\includegraphics[scale=.11]{Reduction}\qquad \includegraphics[scale=.11]{Reduction2}
\end{center}
\caption{Left: The reduction operation on a 4-plat in PS form combines three Seifert circles (with the middle one being a small Seifert circle) into one and reduces the number of crossings by 2. Notice that the two Seifert circles on the two sides may be either medium or small Seifert circles; Right: The resulting R-decomposition of the 4-plat in Figure \ref{multiple_blocks} after all seven possible reduction operations are performed, the dashed lines indicate where the reductions were carried out.
\label{Reduction}}
\end{figure}

In the following we explain our approach in the determination of $R^{I}_n(d)$, $R^{III}_n(d)$ and $RS^{III}_n(d)$. 
Start from a Type I  (Type III) R-decomposition with deficiency $d$. If we perform $d$ reduction operations we will end up with a Type I (Type III) R-decomposition with zero deficiency such as  the two R-decompositions  on the right sides  of Figures \ref{Reduction} and \ref{Template}. Notice that in an R-decomposition with deficiency $0$, there are no small Seifert circles and each medium Seifert circle shares at least two crossings with the large Seifert circle. We will call the first crossing and the last crossing that each medium Seifert circle shares with the large Seifert circle the {\em essential crossings}. Notice that the inverse operation of a reduction operation is to first split a medium or small Seifert circle into two Seifert circles at a location that will not affect the essential crossings, and then insert a small Seifert circle between them (and  adding  a crossing between the  newly created small Seifert circle and  each  of  its  two  neighboring Seifert  circles). We call this an {\em insertion operation}. 
The crossings in an R-decomposition with deficiency $0$ that are not  essential are called {\em free crossings} and the crossings deleted by the reduction operations are called {\em $r$-crossings}. A reduction  operation reduces the number of medium/small Seifert  circles and the number of $r$-crossings each by 2, while an insertion operation increases the number of  medium/small Seifert circles and the number of $r$-crossings each by 2.

\medskip
An R-decomposition with neither free crossings nor $r$-crossings is called an {\em R-template}. An example of an R-template is given in the left of Figure \ref{Template}, which can be obtained from the R-decomposition in the right of Figure \ref{Template} by removing the free crossings. Since an R-decomposition with deficiency $d$ and $f$ free crossings can be reduced to an R-template by the reduction operations and free crossing deletions, it can also be re-constructed from an R-template. For example, the R-decomposition in the right of Figure \ref{Template} can be constructed from the R-template given in the left of Figure \ref{Template} by adding 1, 3, 0 and 2 free crossings in the shaded areas from left to right respectively as shown in Figure \ref{Template}.  The R-decomposition with zero deficiency shown in the  right of Figure \ref{Reduction} is constructed from the Type III template with 5 medium Seifert circles by adding 12 free crossings in the positions shown, and the R-decomposition in Figure \ref{multiple_blocks}  can be re-constructed from it by adding the small Seifert circles and crossings in  the locations indicated by dashed  lines in the figure. In general,  if the R-template contains $k$ medium Seifert circles, then it contains $2k$ essential crossings. Since each reduction operation reduces 2 crossings, if an R-decomposition with deficiency $d$ and $f$ free crossings is reduced to an R-template with $k$ medium Seifert circles, then we have $f+2d+2k=n$. 

\medskip
\begin{figure}[!ht]
\begin{center}
\includegraphics[width=2.4in,height=0.5in]{Figure11}\quad \includegraphics[width=2.4in,height=0.5in]{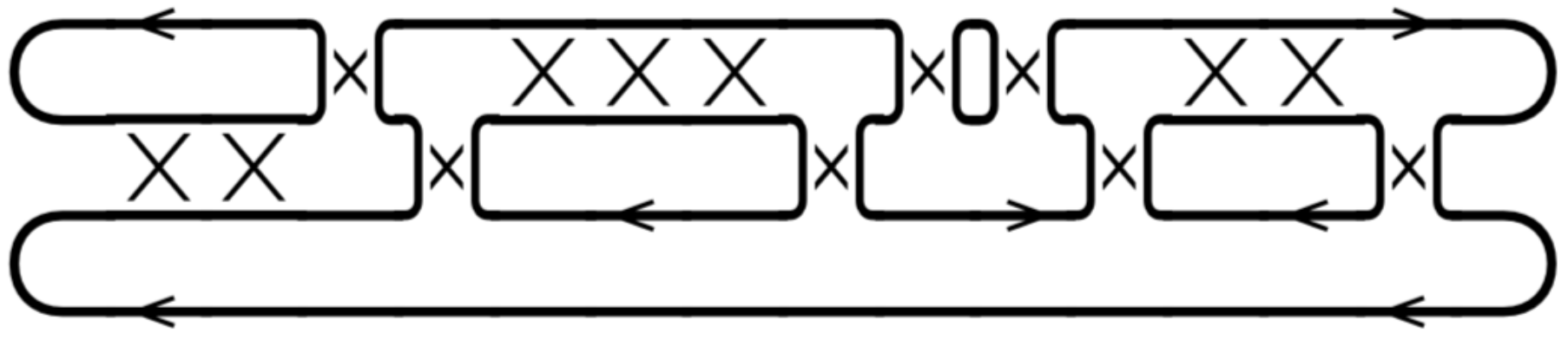}
\end{center}
\caption{Left: A Type I R-template with four medium Seifert circles (and 8 essential crossings) corresponding to the signed vector $(1,1,-2,2,-2)$; Right: A Type I R-decomposition with four medium Seifert circles and 14 crossings can be constructed from the Type I R-template in  the left by adding $6$ crossings in the shaded areas.
\label{Template}}
\end{figure}

\subsection{The counting of Type I R-decompositions.}

Consider an R-decomposition in $R^{I}_n(d)$ whose corresponding R-template has $k=2j$ ($j\ge 1$) medium Seifert circles. The number of free crossings in it is given by  $f=n-2d-2k=n-2d-4j$ (it is necessary that $n\ge 4$ in this case). The free crossings can be distributed to the $2j$ horizontal spaces between the medium Seifert circles and the large Seifert circle (marked by the gray boxes in Figure \ref{Template}), and there are $C(f+2j-1,2j-1)$ ways to do so. Once the free crossings have been determined, there are $2j+f$ slots to insert the small Seifert circles as indicated by the vertical dashed lines in Figure  \ref{Reduction}, and there are a total of $C(d+f+2j-1,f+2j-1)$ ways to  perform the $d$ insertions. It follows that the total number of Type I R-decompositions with $n$ crossings, deficiency $d$ that can be constructed from an R-template with  $k=2j\ge 4$ medium Seifert circles  is $C(f+2j-1,2j-1)C(d+f+2j-1,f+2j-1)=C(f+2j-1,2j-1)C(d+f+2j-1,d)$ where  $f=n-2d-4j$. It follows that 

\medskip
\begin{equation}\label{TypeI_count}
|R^I_n(d)|=\sum_{j=1}^{\lfloor \frac{n-2d}{4}\rfloor}
{{n-2d-2j-1}\choose{2j-1}}{{n-d-2j-1}\choose{d}}.
\end{equation}

For the sake of convenience we define any summation term in the above as 0 if $\lfloor \frac{n-2d}{4}\rfloor<1$.

\medskip
\subsection{The counting of Type III R-decompositions.}

\medskip
A Type III R-decomposition has an R-template with an odd number (at least 1) of medium Seifert circles (hence at least 2 crossings). Similar to the discussion of  the Type I case above, we have
\begin{equation}\label{TypeIII_count}
|R^{III}_n(d)|=\sum_{j=0}^{\lfloor \frac{n-2d-2}{4}\rfloor}
{{n-2d-2j-2}\choose{2j}}{{n-d-2j-2}\choose{d}}.
\end{equation} 
The details are left to the reader.

\medskip
\subsection{The counting of symmetric Type III R-decompositions.}

\medskip
Consider an R-decomposition in $RS^{III}_n(d)$ whose corresponding R-template has $1+2j$ ($j\ge 0$) medium Seifert circles and deficiency $d$. There are $f=n-4j-2d-2$ free crossings to be distributed to  the $1+2j$ available  slots in a  symmetric way. 

\medskip
Case 1. $n$ is even. In this case $f$ is even and there are $C(n/2-j-d-1,j)$ distinct ways to distribute  the free crossings in the template symmetrically and each of them results  in an  R-decomposition in $RS^{III}_{n-2d}(0)$ with a total of $1+2j+f$ slots for the $d$ insertion operations, and the  insertions  need to be performed in  a symmetric manner as well.

\medskip
Case 1(a) $d$ is even. In  this case for each R-decomposition from the above, there are $C((n-d)/2-j-1,d/2)$ distinct ways to distribute  the  $d$ insertions symmetrically. 

\medskip
Case 1(b)  $d$ is odd (so  $d\ge  1$). Here for each R-decomposition in Case 1, there are $C((n-d-1)/2-j-1,(d-1)/2)$ distinct ways to distribute the  $d$ insertions symmetrically.

\medskip
Case 2. $n$ is odd. In this case $f$ is odd and there are $C((n-1)/2-j-d-1,j)$ distinct ways to distribute  the free crossings in the template symmetrically and a free crossing has to be placed in the middle of the template. Each of which leads to an  R-decomposition in $RS^{III}_{n-2d}(0)$ which has a total of $1+2j+f$ slots for the $d$ insertion operations to be performed in  a symmetric manner.

\medskip
Case 2(a) $d$ is even. In  this case for each R-decomposition from the above, there are $C((n-d-1)/2-j-1,d/2)$ distinct ways to distribute the  $d$ insertions symmetrically. 

\medskip
Case 2(b)  $d$ is odd (so  $d\ge  1$). In  this case for it is not possible  to distribute the  $d$ insertions symmetrically since to do so an insertion  has to be in the middle  of  the  template, yet the  middle  has been occupied by a free crossing already.

\medskip
Summarizing the above cases,  we have 
\begin{eqnarray}
&&|RS^{III}_n(d)|\nonumber\\
&=&\left\{
\begin{array}{ll}
\sum_{j=0}^{\lfloor \frac{n-2d-2}{4}\rfloor}{{\frac{n}{2}-d-j-1}\choose{j}}
{{\frac{n-d}{2}-j-1}\choose{\frac{d}{2}}},&\ n\ {\rm even},\  d\ {\rm even}\\
\sum_{j=0}^{\lfloor \frac{n-2d-2}{4}\rfloor}{{\frac{n}{2}-d-j-1}\choose{j}}
{{\frac{n-d-1}{2}-j-1}\choose{\frac{d-1}{2}}},&\ n\ {\rm even},\  d\ {\rm odd}\\
\sum_{j=0}^{\lfloor \frac{n-2d-2}{4}\rfloor}{{\frac{n-1}{2}-d-j-1}\choose{j}}
{{\frac{n-d-1}{2}-j-1}\choose{\frac{d}{2}}},&\ n\ {\rm odd},\  d\ {\rm even}\\
0,&\ n\ {\rm odd},\  d\ {\rm odd}
\end{array}
\right.\nonumber\\
&=&
\frac{1+(-1)^{nd}}{2}\sum_{j=0}^{\lfloor \frac{n-2d-2}{4}\rfloor}{{\lfloor\frac{n}{2}\rfloor-d-j-1}\choose{j}}
{{\lfloor\frac{n-d}{2}\rfloor-j-1}\choose{\lfloor\frac{d}{2}\rfloor}}.
\label{RSIII_count}
\end{eqnarray}

\medskip
Table \ref{t1} contains the computation results of $|R^I_n(d)|$, $|R^{III}_n(d)|$, $|RS^{III}_n(d)|$  and $|\M_n|$ for $2\le n\le 13$. 

\begin{table}[h]
\begin{center}
\begin{tabular}{|c|c|c|c|c|c|c|c|}\hline
\backslashbox{$n$}{$d$}&0&1&2&3&4&5&$|\M_n|$\\\hline\hline
2&0,1,1&&&&&&2\\\hline
3&0,1,1&&&&&&2\\\hline
4&1,1,1&0,1,1&&&&&5\\\hline
5&2,1,1&0,2,0&&&&&6\\\hline
6&3,2,2&2,3,1&0,1,1&&&&15\\\hline
7&4,4,2&6,4,0&0,3,1&&&&24\\\hline
8&6,7,3&12,8,2&3,6,2&0,1,1&&&51\\\hline
9&10,11,3&20,18,0&12,10,2&0,4,0&&&90\\\hline
10&17,17,5&34,37,3&30,21,5&4,10,2&0,1,1&&187\\\hline
11&28,27,5&62,68,0&60,51,5&20,20,0&0,5,1&&352\\\hline
12&45,44,8&116,119,5&115,118,10&60,45,5&5,15,3&0,1,1&715\\\hline
13&72,72,8&212,208,0&228,246,10&140,116,0&30,35,3&0,6,0&1386\\\hline
\end{tabular}
\end{center}
\caption{The three numbers in  the $(n,d)$ position are $|R^I_n(d)|$, $|R^{III}_n(d)|$ and $|RS^{III}_n(d)|$ respectively in that order.} \label{t1}
\end{table}

\section{Further Discussion: Simplifications and Relation to Known Number Sequences}

\medskip
In this section we  take a closer look at the formulas we obtained and show that they can be related to some well known integer sequences. We have the following theorem.

\begin{theorem}\label{equiv_thm}
Let $F^{(d)}_{q}$ be the convolved Fibonacci sequence defined (recursively) by
$F^{(d)}_{q}=\sum_{k=0}^q F_k\cdot F_{q-k}^{(d-1)}$ for $d\ge 1$, and $F^{(0)}_{q}=F_q$ where $\{F_0,F_1,F_2,F_3,F_4,F_5,...\}=\{0,1,1,2,3,5,...\}$ is the Fibonacci sequence, then we have
\begin{equation}\label{Fibo_eq_1}
|R^I_n(d)|+|R^{III}_n(d)|=F^{(d)}_{n-d-1}
\end{equation}
and
\begin{eqnarray}\label{Fibo_eq_2}
|RS^{III}_n(d)|
&=&
\frac{1+(-1)^{nd}}{2}F^{(\lfloor \frac{d}{2}\rfloor)}_{\lfloor \frac{n}{2}\rfloor-\lfloor \frac{d+1}{2}\rfloor}.
\end{eqnarray}
\end{theorem}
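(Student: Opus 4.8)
The plan is to reduce both identities (\ref{Fibo_eq_1}) and (\ref{Fibo_eq_2}) to a single binomial--Fibonacci ``master identity'' and to prove that one identity once and for all by a short generating-function computation; the two displayed formulas then drop out after substituting the correct indices. The master identity I aim to establish is: for all integers $m\ge 0$ and $a\ge 0$,
$$
\sum_{i\ge 0}{{m-2a-i-1}\choose{i}}{{m-a-i-1}\choose{a}}=F^{(a)}_{m-a},
$$
with the usual convention ${{p}\choose{q}}=0$ unless $0\le q\le p$. Since convolution of sequences corresponds to multiplication of ordinary generating functions and $\sum_{q\ge 0}F_qx^q=x/(1-x-x^2)$, the defining recursion of the convolved Fibonacci numbers gives $\sum_{q\ge 0}F^{(a)}_qx^q=\bigl(x/(1-x-x^2)\bigr)^{a+1}$ by a one-line induction on $a$, hence $\sum_{m}F^{(a)}_{m-a}\,x^m=x^{2a+1}/(1-x-x^2)^{a+1}$. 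On the other side, I substitute $r=m-2a-i-1$ (so $m-a-i-1=a+r$ and $m=2a+i+r+1$) and use the binomial theorem $\sum_{i=0}^{r}{{r}\choose{i}}x^i=(1+x)^r$ together with $\sum_{r\ge 0}{{a+r}\choose{a}}y^r=(1-y)^{-(a+1)}$ evaluated at $y=x+x^2$ to obtain
$$
\sum_{m}\Bigl(\sum_{i\ge 0}{{m-2a-i-1}\choose{i}}{{m-a-i-1}\choose{a}}\Bigr)x^m=x^{2a+1}\sum_{r\ge 0}{{a+r}\choose{a}}(x+x^2)^r=\frac{x^{2a+1}}{(1-x-x^2)^{a+1}}.
$$
Comparing coefficients of $x^m$ yields the master identity.

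To deduce (\ref{Fibo_eq_1}), I add the explicit formulas (\ref{TypeI_count}) and (\ref{TypeIII_count}) and re-index the first sum by $k=2j$ (even $k\ge 2$) and the second by $k=2j+1$ (odd $k\ge 1$); in both cases the summand collapses to ${{n-2d-k-1}\choose{k-1}}{{n-d-k-1}\choose{d}}$, so that
$$
|R^I_n(d)|+|R^{III}_n(d)|=\sum_{k\ge 1}{{n-2d-k-1}\choose{k-1}}{{n-d-k-1}\choose{d}}=\sum_{i\ge 0}{{(n-1)-2d-i-1}\choose{i}}{{(n-1)-d-i-1}\choose{d}}
$$
with $i=k-1$. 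Applying the master identity with $m=n-1$ and $a=d$ gives $F^{(d)}_{(n-1)-d}=F^{(d)}_{n-d-1}$, which is (\ref{Fibo_eq_1}).

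To deduce (\ref{Fibo_eq_2}), I first note that if $n$ and $d$ are both odd then $\tfrac{1+(-1)^{nd}}{2}=0$ and, by the last branch of (\ref{RSIII_count}), both sides vanish. In the three remaining parity cases the prefactor is $1$; there I set $a=\lfloor d/2\rfloor$ and $m=\lfloor(n-d)/2\rfloor+\lfloor d/2\rfloor$, and a short case check (using $d-\lfloor d/2\rfloor=\lfloor(d+1)/2\rfloor$) confirms $\lfloor n/2\rfloor-d=m-2a$ and $\lfloor(n-d)/2\rfloor=m-a$. Then the sum in (\ref{RSIII_count}) is precisely $\sum_{j\ge 0}{{m-2a-j-1}\choose{j}}{{m-a-j-1}\choose{a}}$, which by the master identity equals $F^{(a)}_{m-a}=F^{(\lfloor d/2\rfloor)}_{\lfloor(n-d)/2\rfloor}$; finally the elementary identity $\lfloor(n-d)/2\rfloor=\lfloor n/2\rfloor-\lfloor(d+1)/2\rfloor$, valid exactly when $n$ and $d$ are not both odd, rewrites this as $F^{(\lfloor d/2\rfloor)}_{\lfloor n/2\rfloor-\lfloor(d+1)/2\rfloor}$, establishing (\ref{Fibo_eq_2}).

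The conceptual heart is the substitution $y=x+x^2$ in the master identity, after which the argument is a routine formal power series manipulation. The part most likely to cause trouble is the floor bookkeeping for (\ref{Fibo_eq_2}): one must track $\lfloor n/2\rfloor$, $\lfloor(n-d)/2\rfloor$ and $\lfloor(d+1)/2\rfloor$ consistently through the parity cases, and also verify that the stated summation range $0\le j\le\lfloor\frac{n-2d-2}{4}\rfloor$ in (\ref{RSIII_count}) coincides with the range in which ${{m-2a-j-1}\choose{j}}$ is nonzero. Both reduce to elementary inequalities among floor functions, but they are exactly the kind of thing where an off-by-one error can creep in, so I would write those checks out carefully rather than leave them to the reader.
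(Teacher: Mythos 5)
Your proposal is correct, and it reaches the theorem by a genuinely different route for the core identity. The paper first merges (\ref{TypeI_count}) and (\ref{TypeIII_count}) into the single sum $H^{(d)}_n$ of (\ref{TypeIandIII_count}) -- exactly your re-indexing $i=k-1$ -- but then proves $H^{(d)}_n=F^{(d)}_{n-d-1}$ by verifying that $H^{(d)}_n$ satisfies the shifted convolved-Fibonacci recurrence $H^{(d+1)}_{n+1}=H^{(d+1)}_{n}+H^{(d+1)}_{n-1}+H^{(d)}_{n-1}$ via Pascal-type manipulations of the binomial sums, split into the parity cases $n=2m$ and $n=2m+1$, together with base cases read off from Table \ref{t1}. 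You instead prove the closed-form ``master identity'' $\sum_{i}\binom{m-2a-i-1}{i}\binom{m-a-i-1}{a}=F^{(a)}_{m-a}$ in one stroke by comparing coefficients in $x^{2a+1}/(1-x-x^2)^{a+1}$, using the substitution $y=x+x^2$; I checked the reparametrization $r=m-2a-i-1$ and it is sound under your stated binomial convention, and the summation bounds in (\ref{TypeI_count})--(\ref{RSIII_count}) do coincide with the support of $\binom{m-2a-i-1}{i}$ (the inequalities $4j\le n-2d$, $4j\le n-2d-2$, and $\lfloor(\lfloor n/2\rfloor-d-1)/2\rfloor=\lfloor(n-2d-2)/4\rfloor$ for $nd$ even all check out). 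For (\ref{Fibo_eq_2}) both arguments are structurally parallel: the paper identifies $|RS^{III}_{2m}(2c)|$ with $H^{(c)}_{m+1}$ and reuses part one, while you apply the master identity with $a=\lfloor d/2\rfloor$ and $m=\lfloor(n-d)/2\rfloor+\lfloor d/2\rfloor$; your floor-function bookkeeping ($m-2a=\lfloor n/2\rfloor-d$, $\lfloor(n-d)/2\rfloor=\lfloor n/2\rfloor-\lfloor(d+1)/2\rfloor$ when $n,d$ are not both odd) is correct in all three nontrivial parity cases. What your approach buys is a uniform, induction-free proof that handles both parts of the theorem and avoids the paper's two-case recurrence verification and its reliance on tabulated base values; what the paper's approach buys is that it stays entirely within elementary binomial identities and makes the structural kinship with the Fibonacci recurrence explicit. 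The one thing you should indeed write out, as you note, is the boundary check that the extended sums agree with the bounded ones -- it works, but it is where an off-by-one error would hide.
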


\begin{proof} We will prove (\ref{Fibo_eq_1}) first. 
For the sake of convenience we will use  the notation $H^{(d)}_{n}$ to denote $|R^I_n(d)|+|R^{III}_n(d)|$. By making the substitution $k=2j-1$ in (\ref{TypeI_count}) and $k=2j$ in (\ref{TypeIII_count}), 
we obtain
\begin{equation}\label{TypeIandIII_count}
H^{(d)}_{n}
=\sum_{k=0}^{\lfloor \frac{n-2d-2}{2}\rfloor}
{{n-2d-2-k}\choose{k}}{{n-d-2-k}\choose{d}}.
\end{equation}
Since there are no nontrivial rational links with crossing number $n\ge 1$ nor with deficiency $d>(n-2)/2$, we will define $H^{(d)}_{0}=H^{(d)}_{1}=0$ for any $d$ and $H^{(d)}_{n}=0$ for any $d$ such that $d>(n-2)/2$ or $d<0$. 
The equation $H^{(d)}_{n}= F^{(d)}_{n-d-1}$ holds for small values of $n$ and $d$ by Table \ref{t1}. It is known that $F_n^{(d)}$ can be determined by the recursive relation
$$
F_{n}^{(d)}=F_{n-1}^{(d)}+F_{n-2}^{(d)}+F_{n-1}^{(d-1)},
$$
this is equivalent to the following recurrence equation for $H_n^{(d)}$  ($n\ge 2$ and $d\ge -1$):
\begin{equation}\label{conv_rec}
H_{n+1}^{(d+1)}=H_{n}^{(d+1)}+H_{n-1}^{(d+1)}+H_{n-1}^{(d)}.
\end{equation}
We will verify (\ref{conv_rec}) by 
considering the 2 different cases $n=2m$ and $n=2m+1$ ($m\ge 1$). In the following we shall demonstrate the  case  of $n=2m$. The other case can  be similarly verified and is left to the  reader  as an exercise. Write $m-d-2=q$ for short, we have
\begin{eqnarray*}
&&H_{2m}^{(d+1)}+H_{2m-1}^{(d+1)}\\
&=&
\sum_{k=0}^{q}
{{2q-k}\choose{k}}{{2q+d+1-k}\choose{d+1}}+
\sum_{k=0}^{q-1}
{{2q-1-k}\choose{k}}{{2q+d-k}\choose{d+1}}\\
&=&
{{2q+d+1}\choose{d+1}}+
\sum_{k=0}^{q-1}
{{2q-k-1}\choose{k+1}}{{2q+d-k}\choose{d+1}}\\
&+&
\sum_{k=0}^{q-1}
{{2q-1-k}\choose{k}}{{2q+d-k}\choose{d+1}}\\
&=&
{{2q+d+1}\choose{d+1}}+
\sum_{k=0}^{q-1}
{{2q-k}\choose{k+1}}{{2q+d-k}\choose{d+1}}\\
&=&
\sum_{k=0}^{q}
{{2q+1-k}\choose{k}}{{2q+d+1-k}\choose{d+1}}.
\end{eqnarray*}
It follows that
\begin{eqnarray*}
&&H_{2m}^{(d+1)}+H_{2m-1}^{(d+1)}+H_{2m-1}^{(d)}\\
&=&
\sum_{k=0}^{q}
{{2q+1-k}\choose{k}}{{2q+d+1-k}\choose{d+1}}\\
&+&
\sum_{k=0}^{q}
{{2q+1-k}\choose{k}}{{2q+d+1-k}\choose{d}}\\
&=&
\sum_{k=0}^{q}
{{2q+1-k}\choose{k}}{{2q+d+2-k}\choose{d+1}}\\
&=&
H_{2m+1}^{(d+1)}.
\end{eqnarray*}
Notice that the above holds for the special case $d=-1$ as well since in which case $H_{2m-1}^{(-1)}=0$ and we only need to consider  the first two  summations in the above proof (and the equation still holds in which case). This proves (\ref{Fibo_eq_1}).

\medskip
We now prove (\ref{Fibo_eq_2}). For $d=2c$, $n=2m\ge 2+2c$, we have 
\begin{eqnarray*}
&&|RS^{III}_n(d)|=|RS^{III}_{2m}(2c)|\\
&=&
\sum_{j=0}^{\lfloor \frac{2m-4c-2}{4}\rfloor}{{\lfloor\frac{2m}{2}\rfloor-2c-j-1}\choose{j}}
{{\lfloor\frac{2m-2c}{2}\rfloor-j-1}\choose{\lfloor\frac{2c}{2}\rfloor}}\\
&=&
\sum_{j=0}^{\lfloor \frac{m-2c-1}{2}\rfloor}{{m-2c-j-1}\choose{j}}{
{m-c-j-1}\choose{c}}\\
&=&H_{m+1}^{(c)}=F^{(c)}_{m-c}=F^{(\lfloor \frac{d}{2}\rfloor)}_{\lfloor \frac{n}{2}\rfloor-\lfloor \frac{d+1}{2}\rfloor}
\end{eqnarray*}
by the proof above for (\ref{Fibo_eq_1}).
Similarly one can prove that 
$$
|RS^{III}_{2m+1}(2c)|=F_{m-c}^{(c)}=F_{\lfloor \frac{n}{2}\rfloor-\lfloor \frac{d+1}{2}\rfloor}^{(\lfloor \frac{d}{2}\rfloor)}
$$
and 
$$
|RS^{III}_{2m}(2c+1)|=F_{m-c-1}^{(c)}=F_{\lfloor \frac{n}{2}\rfloor-\lfloor \frac{d+1}{2}\rfloor}^{(\lfloor \frac{d}{2}\rfloor)}.
$$
The details are left to the reader.
\qed
\end{proof}

\medskip
Since for each fixed $n$, the summation of $|R^I_n(d)|$, $|R^{III}_n(d)|$, $|RS^{III}_n(d)|$ over $d$ equals $|\M_n|$,  Theorem \ref{equiv_thm} leads to the following equality.

\begin{corollary}
For any $n\ge 2$ and $d\ge 0$, we have 
\begin{eqnarray}
&&
\frac{1}{3}\left(2^{n-1}+\frac{5+(-1)^n}{2}2^{\lfloor \frac{n}{2}\rfloor-1}+\frac{-1+(-1)^n+
2(-1)^{\lfloor \frac{n+1}{2}\rfloor n}}{2}\right)\nonumber\\
&=&
\sum_{d=0}^{\lfloor \frac{n-2}{2}\rfloor}\left(F_{n-d-1}^{(d)}+
\frac{1+(-1)^{nd}}{2}F^{(\lfloor \frac{d}{2}\rfloor)}_{\lfloor \frac{n}{2}\rfloor-\lfloor \frac{d+1}{2}\rfloor}\right).\label{n_even_odd}
\end{eqnarray}
\end{corollary}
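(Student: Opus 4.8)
The plan is to observe that (\ref{n_even_odd}) is not really a new identity at all: both sides are closed-form expressions for the \emph{same} integer $|\Lambda_n|$, so the equality follows purely by assembling results already proved in Sections~\ref{enu_ori}, \ref{enu_def} and~\ref{Fibo_eq_1}'s theorem together with the trivial partition (\ref{partition_sum}). Nothing beyond careful bookkeeping is needed.

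Concretely, I would argue as follows. First, the left-hand side of (\ref{n_even_odd}) is exactly the expression (\ref{T2_eq_2}), which by Theorem~\ref{general_4plat_count} (and the computation in its proof combining the Ernst--Sumners counts $TK_n$, $TL_n$, $TL^s_n$) equals $|\Lambda_n|$ for every $n\ge 2$; this requires no further work. Next I would evaluate the right-hand side term by term. Fix $d$ with $0\le d\le \lfloor (n-2)/2\rfloor$. Equation (\ref{Fibo_eq_1}) of Theorem~\ref{equiv_thm} gives $F_{n-d-1}^{(d)}=|R^I_n(d)|+|R^{III}_n(d)|$, and equation (\ref{Fibo_eq_2}) gives $\frac{1+(-1)^{nd}}{2}F^{(\lfloor d/2\rfloor)}_{\lfloor n/2\rfloor-\lfloor (d+1)/2\rfloor}=|RS^{III}_n(d)|$. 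Adding these two and invoking the counting formula (\ref{counting_d}), the $d$-th summand on the right of (\ref{n_even_odd}) is $|R^I_n(d)|+|R^{III}_n(d)|+|RS^{III}_n(d)|=|\Lambda_n(d)|$. Summing over $d$ from $0$ to $\lfloor (n-2)/2\rfloor$ and using (\ref{partition_sum}), the right-hand side of (\ref{n_even_odd}) equals $\sum_{d}|\Lambda_n(d)|=|\Lambda_n|$, which agrees with the left-hand side, completing the proof.

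I do not expect a genuine obstacle: the whole content is carried by Theorems~\ref{general_4plat_count} and~\ref{equiv_thm}. The only point needing a word of care is that the two index sets match and that the out-of-range terms vanish: a rational link with crossing number $n$ has deficiency $d$ with $0\le d\le \lfloor (n-2)/2\rfloor$, which is precisely the summation range on the right of (\ref{n_even_odd}), and the conventions adopted after (\ref{TypeI_count}) and the parity factor $\tfrac{1+(-1)^{nd}}{2}$ in (\ref{RSIII_count}) ensure the summands are $0$ outside it, so no term is double-counted or dropped. If instead one wanted an independent analytic check bypassing the knot theory, the natural route would be to use the generating function $\sum_{q\ge 0}F_q^{(d)}x^q=x^{d+1}/(1-x-x^2)^{d+1}$, substitute it into the right-hand side of (\ref{n_even_odd}), sum the resulting geometric-type series over $d$, and match coefficients against the rational generating function underlying (\ref{T2_eq_2}); this is more laborious and, given the combinatorial argument above, unnecessary.
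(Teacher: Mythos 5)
Your proof is correct and is essentially identical to the paper's own justification: the authors likewise observe that the left side equals $|\Lambda_n|$ by Theorem \ref{general_4plat_count} (in the combined form (\ref{T2_eq_2})) and that the right side is $\sum_d |\Lambda_n(d)| = |\Lambda_n|$ via (\ref{counting_d}), Theorem \ref{equiv_thm}, and the partition (\ref{partition_sum}). Your added remark about the index range and vanishing out-of-range terms is a sensible bit of care that the paper leaves implicit.
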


\medskip
We note that  the left side of (\ref{n_even_odd}) is the sequence A007581 (with its first term truncated) in the Online Encyclopedia of Integer Sequences~\cite{OEIS} when $n$ is even, and is A192466 when $n$ is odd.

\medskip
We end our paper with the following remark.

\medskip
\begin{remark}{\em
In \cite{DiaoErnst2005}, it was established that $|\M_n(0)|$ grows exponentially at a rate at least $e^{\frac{\ln 3}{7} n}=(\sqrt[7]{3})^n\approx 1.17^n$. In the special case of $d=0$, Theorem \ref{equiv_thm} yields $|\M_n(0)|=F_{n-1}+F_{\lfloor \frac{n}{2}\rfloor}$. Since $F_n=\frac{\phi^n-(-\phi)^{-n}}{\sqrt{5}}$ where $\phi=\frac{1+\sqrt{5}}{2}\approx 1.618$ is the golden ratio, this improves this growth rate of $|\M_n(0)|$ to approximately $\frac{1.618^{n-1}+1.272^n}{\sqrt{5}}$.
}
\end{remark}

\section*{Acknowledgement} 

The authors thank Prof. Gabor Hetyei for his helpful comments and suggestions. 

\medskip
\section*{References}

\end{document}